\documentclass{article}

\usepackage[preprint]{arxive_2026}

\usepackage[utf8]{inputenc}
\usepackage[T1]{fontenc}
\usepackage{hyperref}
\usepackage{url}
\usepackage{amsmath,amssymb,amsfonts,amsthm,mathtools}
\usepackage{nicefrac}
\usepackage{microtype}
\usepackage{xcolor}
\usepackage{graphicx}
\usepackage{algorithm}
\usepackage{algpseudocode}
\usepackage{enumitem}
\usepackage{subcaption} 
\usepackage{booktabs}   
\usepackage{capt-of}
\usepackage{float}

\newtheorem{theorem}{Theorem}
\newtheorem{lemma}{Lemma}
\newtheorem{proposition}{Proposition}
\newtheorem{corollary}{Corollary}
\theoremstyle{definition}

\theoremstyle{remark}

\newcommand{\spfun}{\operatorname{softplus}}

\newcommand{\NE}{\mathrm{NE}}

\newcommand{\methodours}{NB--HalpernSGD via LM--MultiLRSGA}
\newcommand{\Lleft}{L_{\mathrm{left}}}
\newcommand{\Lcenter}{L_{\mathrm{center}}}
\newcommand{\Lright}{L_{\mathrm{right}}}
\newcommand{\Lsum}{L_{\mathrm{sum}}}
\newcommand{\thetane}{\theta^{\mathrm{NE}}}

\title{Nash-Bargaining HalpernSGD via Limited-Memory MultiLRSGA: A Two-Phase Optimizer for Multi-Objective Learning}

\author{%
  Katherine Rossella Foglia \\
  Department of Mathematics and Computer Science \\
  University of Calabria \\
  Rende (CS), 87036, Italy \\
  \texttt{katherine.foglia@unical.it} \\
  \And
  Francesco Sergio Pisani \\
  Institute for High Performance Computing and Networking \\
  Italian National Research Council \\
  Rende (CS), 87036, Italy \\
  \texttt{francescosergio.pisani@icar.cnr.it} \\
  \And
  Vittorio Colao \\
  Department of Mathematics and Computer Science \\
  University of Calabria \\
  Rende (CS), 87036, Italy \\
  \texttt{vittorio.colao@unical.it}%
}

\begin{document}

\maketitle

\begin{abstract}
We propose NB-HalpernSGD via LM-MultiLRSGA, a two-phase optimizer for multi-objective optimization. The process starts with a competitive optimization phase by applying a limited-memory variant of MultiLRSGA, named LM-MultiLRSGA, designed to reduce the memory footprint of the original optimizer while preserving its rotational correction mechanism. It addresses multi-objective tasks by considering the competitive game associated with the original multi-objective problem, approximating a Nash equilibrium. This point defines a Nash-equilibrium-induced disagreement point: we formulate the Nash bargaining problem associated with the original losses and rewrite its Nash product as a logarithmic minimization surrogate. This minimization problem is solved using HalpernSGD, anchored at the computed competitive reference point. Therefore, the method uses the Nash equilibrium as a principled reference point for the bargaining stage and then moves toward a Pareto-oriented solution of the original multi-objective problem.
We validate the proposed optimizer on a PINN-inspired neural model, where it outperforms established multi-objective optimizers, including PCGrad, MultiAdam, and DualConeGD. Finally, while the convergence properties of HalpernSGD have been extensively studied, we discuss the convergence and stability properties of the proposed LM-MultiLRSGA phase.

\end{abstract}

\section{Introduction}
\label{sec:introduction}

Many modern multi-task learning models are not governed by a single scalar loss.
Instead, they require the simultaneous optimization of several objectives,

\begin{equation}
    \label{multiobproblem}
    \min_{\theta\in\mathbb{R}^d}
    \big(f_1(\theta),\ldots,f_h(\theta)\big),
    \qquad h\ge 2 .
\end{equation}

Since the objectives may be mutually conflicting, the natural solutions are Pareto-optimal points: \(\theta^\star\) is Pareto optimal if there is no \(\theta\) such that
\(f_i(\theta)\le f_i(\theta^\star)\) for all \(i\) and
\(f_j(\theta)<f_j(\theta^\star)\) for at least one \(j\). Existing first-order approaches usually act directly on the multi-objective
gradient geometry. DCGD \citep{hwang2024dualcone} selects an update direction inside a dual cone so that the
step is compatible with the descent of the relevant losses; MultiAdam \citep{yao2023multiadam} extends Adam by
using objective-wise adaptive moment information to balance losses with different scales; and PCGrad~\citep{yu2020pcgrad} removes destructive gradient interference by projecting conflicting task gradients.

We propose a different approach: a competitive-to-multiobjective optimizer.
Starting from a block-structured
multi-objective problem, we first consider the associated \(h\)-player differentiable game
obtained by partitioning the parameter vector as
\(\theta=(\theta_1,\ldots,\theta_h)\) and assigning player \(i\) the loss function \(f_i(\theta)\), namely
\begin{equation} \label{competproblem}
    \min_{\theta_i} f_i(\theta_1,\ldots,\theta_h)
    \qquad h\ge 2,\quad i \in \{1, \dots, h \} .
\end{equation}
This competitive phase computes an approximate Nash equilibrium \(\theta^{\rm NE}\).
Crucially, this equilibrium is not used as the final multi-objective solution. A Nash equilibrium is a unilateral notion: no player can lower its own loss by changing only its
own block while the other blocks are fixed. It need not be Pareto efficient. We therefore use
\(\theta^{\rm NE}\) only as a principled disagreement point for a second, cooperative
phase based on Nash bargaining.

\paragraph{Main Contributions.}
LM--MultiLRSGA is introduced as a limited-memory version of the recently  MultiLRSGA optimizer \citep{foglia2026multilrsga}. MultiLRSGA extends the low-rank symplectic correction of LRSGA \citep{vater2025lrsga} to \(h\ge2\) players by approximating the mixed second-order interactions through secant matrices, thereby avoiding explicit mixed Hessian computations. However, the full method still stores dense playerwise secant matrices, whose memory cost is prohibitive in large neural models. Following the limited-memory philosophy of LBFGS \citep{lbfgs} in the single objective optimization and LM--LRSGA \citep{foglia2026lmlrsga} in the two-player setting, we store only a limited history of displacement-gradient pairs and compute the required skew-correction products through rectangular two-loop recursions.\\
Our second contribution is a Nash-bargaining refinement tailored to neural training.
After the competitive phase, we set
\(
    \delta_i := f_i(\theta^{\rm NE})
\) for $i=1,\ldots,h,$
and reformulate the Nash product through a tractable minimization surrogate. Under the
classical bargaining assumptions, the Nash bargaining solution is Pareto efficient; in our nonconvex learning setting, the surrogate provides a practical Pareto-oriented cooperative refinement of the original objectives \citep{nash1950bargaining,nash1953two,cala2021nash}. We solve this surrogate with HalpernSGD anchored at \(\theta^{\rm NE}\). This choice is motivated, in convex smooth settings, by the theoretical convergence properties of Halpern-type stochastic gradient
schemes and by empirical evidence that HalpernSGD can reduce training epochs and
carbon emissions compared with SGD while preserving predictive performance
\citep{foglia2024halpernsgd,colao2026optimizer, colao2026halpernsgd}.

The rest of the paper is organized as follows. Section~\ref{sec:method}
introduces the proposed two-phase optimizer. It first presents LM--MultiLRSGA,
the limited-memory extension of MultiLRSGA used in the competitive phase to compute an approximate stable Nash equilibrium, and then describes the
Nash-bargaining surrogate optimized by HalpernSGD. Section~\ref{sec:pinn_experiments} evaluates the method on a hard-constrained domain-decomposed PINN benchmark and compares it with PCGrad, MultiAdam, and DualConeGD. Section~\ref{sec:experiments} introduces the performance discussion on a different domain presenting a GAN experiment showing that the Nash-bargaining refinement improves the quality
of the generated samples. Section~\ref{sec:conclusion} concludes the paper. The
derivation and local convergence theory of LM--MultiLRSGA, together with
additional experimental diagnostics, are collected in the appendix.

\section{Method}
\label{sec:method}

Let \(f_i:\mathbb{R}^d\to\mathbb{R}\), \(i=1,\ldots,h\), be smooth losses, and assume that
\(\theta\) is equipped with a block partition
\(\theta=(\theta_1,\ldots,\theta_h)\), \(\theta_i\in\mathbb{R}^{d_i}\),
\(d=\sum_i d_i\). We address problem~\eqref{multiobproblem} by first using this partition
to define the auxiliary \(h\)-player game~\eqref{competproblem}, whose LM--MultiLRSGA
phase reaches a practical Nash equilibrium point \(\theta^{\mathrm{NE}}\).
Second, we use the
loss vector \((f_1(\theta^{\rm NE}),f_2(\theta^{\rm NE}), \dots , f_h(\theta^{\rm NE})\)) as an endogenous disagreement point and optimize a
Nash-bargaining surrogate anchored at \(\theta^{\rm NE}\).

Denote the game gradient and its Jacobian respectively by $F(\theta)=
\big(\nabla_{\theta_1}f_1(\theta),\ldots,\nabla_{\theta_h}f_h(\theta)\big)\in\mathbb{R}^d$ and $H(\theta)=DF(\theta)=S(\theta)+A(\theta)$, where \(A(\theta)=\tfrac12(H(\theta)-H(\theta)^\top)\) is the antisymmetric component, responsible for rotational behavior. A local SNE satisfies
\(F(\theta^{\rm NE})=0\), local nonsingularity of \(H(\theta^{\rm NE})\), and positive
definiteness of the own-player Hessian blocks.
Our local theory assumes the coercivity condition
\(S(\theta^{\rm NE})\succ0\).

\subsection{Competitive phase: Limited-memory MultiLRSGA}
\label{subsec:lmmultilrsga}

The full-memory MultiLRSGA method stores, for each player \(i\), a dense matrix. In
particular, although it avoids the explicit computation of mixed second-order derivatives by
using rank-one secant approximations, at each iteration it still requires storing the \(h\)
matrices
\[
M_i^k \approx D\bigl(\partial_{\theta_i}f_i(\theta^k)\bigr)
\in \mathbb{R}^{d_i\times d},
\qquad i=1,\dots,h,
\]
from which the blocks \([M_i^k]_j \approx \nabla_{\theta_i\theta_j}^2 f_i(\theta^k)
\in \mathbb{R}^{d_i\times d_j}\) are extracted. The componentwise MultiLRSGA update is
\begin{equation}\label{eq:MLRSGA_component}
\theta_i^{k+1}
= \theta_i^k
-\eta\,\partial_{\theta_i} f_i(\theta^k)
+ \eta\,\frac{\tau}{2}
\sum_{\substack{j=1 \\ j \neq i}}^h
\Bigl([M_{i}^{k}]_j - \bigl([M_{j}^{k}]_i\bigr)^{\top}\Bigr)
\partial_{\theta_j} f_j(\theta^k),
\qquad i=1,\dots,h .
\end{equation}
The corresponding Broyden updates are
\begin{equation}
\label{eq:broyden_multilrsga}
 M_{i}^{k+1}
= M_{i}^k
+
\frac{
\Bigl(\partial_{\theta_i} f_i(\theta^{k+1})
-\partial_{\theta_i} f_i(\theta^k)
- M_{i}^k(\theta^{k+1}-\theta^k)\Bigr)
(\theta^{k+1}-\theta^k)^{\top}}
{(\theta^{k+1}-\theta^k)^{\top}(\theta^{k+1}-\theta^k)},
\quad i=1,\dots,h .
\end{equation}

Storing the matrices \(M_i^k\) becomes prohibitive in large-scale models. We therefore propose
a limited-memory variant called LM-MultiLRSGA based on the most recent \(\ell\) displacement pairs
\[
s^t:=\theta^{t+1}-\theta^{t}\in\mathbb{R}^d,
\qquad
y_i^t:=\partial_{\theta_i}f_i(\theta^{t+1})
-\partial_{\theta_i}f_i(\theta^{t})\in\mathbb{R}^{d_i},
\]
for \(t=k-\ell,\ldots,k-1\). 

In stochastic training, we use the EMA variant (like in \citep{foglia2026lmlrsga}) replacing
\(y_i^t\)  by
\(
\widetilde y_i^t
=
\beta_{\mathrm{EMA}}\widetilde y_i^{t-1}
+
(1-\beta_{\mathrm{EMA}})y_i^t,
\)
which damps mini-batch noise in consecutive gradient differences. 

Defining \(p_t:=1/((s^t)^\top s^t)\) and
\(V_t:=I-p_t s^t(s^t)^\top\), which is symmetric, the Broyden recursion over the last \(\ell\) steps gives
\begin{equation}
\label{FormuleRicorsiveLimitate}
\begin{aligned} 
M_i^k
&=M_i^{k-\ell}(V_{k-\ell}\cdots V_{k-1})
+\sum_{t=k-\ell}^{k-1}p_t\,y_i^t(s^t)^\top(V_{t+1}\cdots V_{k-1}),\\
(M_i^{k})^{\top}
&=(V_{k-1}\cdots V_{k-\ell})(M_i^{k-\ell})^{\top}
+\sum_{t=k-\ell}^{k-1}p_t\,(V_{k-1}\cdots V_{t+1})s^t(y_i^t)^{\top}.
\end{aligned} 
\end{equation}
with the convention that an empty product is the identity.

To compute the products
\(
[M_i^k]_j\,\partial_{\theta_j} f_j(\theta^k),
\) with \(i,j=1,\dots,h\), \(j\neq i\), 
we propose the rectangular two-loop recursion in
Algorithm~\ref{alg:direct-app}, following the L-BFGS idea of \citep{lbfgs} and extending
the two-player LM-LRSGA construction of \citep{foglia2026lmlrsga} to the multi-player setting.
We apply it with
\(
q^{(j)}
:=
\bigl(
0_{d_1},\dots,0_{d_{j-1}},
\partial_{\theta_j} f_j(\theta^k),
0_{d_{j+1}},\dots,0_{d_h}
\bigr)
\in\mathbb{R}^d .
\)
Indeed, \(M_i^k q^{(j)}=[M_i^k]_j\,\partial_{\theta_j} f_j(\theta^k)\). Since the exact
matrix \(M_i^{k-\ell}\) is not stored, we replace it in the recursion by the rank-one
initial matrix
\[
H_{0,i}^k
:=
\frac{
y_i^{k-\ell-1}(s^{k-\ell-1})^\top
}{
(s^{k-\ell-1})^{\top} s^{k-\ell-1}
}
\in\mathbb{R}^{d_i\times d},
\]
computed before the pair \((s^{k-\ell-1},y_i^{k-\ell-1})\) is discarded. Thus the active window has
length \(\ell\), while the effective curvature memory is \(\ell+1\) pairs.
Similarly, following the same principle, we propose the rectangular transpose counterpart in
Algorithm~\ref{alg:transpose-app} to compute the transpose products
\(
\bigl([M_j^k]_i\bigr)^\top
\partial_{\theta_j} f_j(\theta^k),
\) with \(i,j=1,\dots,h\), \(j\neq i\). In particular,
we use the identity
\(
\bigl([M_j^k]_i\bigr)^\top
\partial_{\theta_j} f_j(\theta^k)
=
\Bigl[
(M_j^k)^\top
\partial_{\theta_j} f_j(\theta^k)
\Bigr]_i 
\), and thus Algorithm~\ref{alg:transpose-app} is applied with
\(q=\partial_{\theta_j} f_j(\theta^k)\in\mathbb{R}^{d_j}\) and initial matrix \((H_{0,j}^k)^\top\). The required contribution is obtained by extracting the \(i\)-th
block of the resulting vector.
Hence all products involving \(M_i^k q\) and \((M_i^k)^\top q\) are computed without explicitly forming \(M_i^k\).

\begin{figure}[t]
\centering
\footnotesize

\begin{minipage}[t]{0.485\textwidth}
\noindent\rule{\linewidth}{0.5pt}
\captionof{algorithm}{Two-loop recursion (direct) \\ for \(r=M_i^{k}q\) with  \(q\in\mathbb{R}^{d}\)}
\label{alg:direct-app}
\vspace{-0.3em}
\noindent\rule{\linewidth}{0.5pt}
\vspace{-0.4em}
\begin{algorithmic}[1]

\State \textbf{Input:} \(\{p_t,s^t,y_i^t\}_{t=k-\ell}^{k-1}\), 
\(q\), \(H_{0,i}^{k}\)
\For{\(t = k-1, k-2, \dots, k-\ell\)}
  \State \(\alpha_t \leftarrow p_t (s^t)^{\top} q\)
  \State \(q \leftarrow q-\alpha_t s^t\)
\EndFor
\State \(r \leftarrow H_{0,i}^{k} q\)
\For{\(t = k-\ell, k-\ell+1, \dots, k-1\)}
  \State \(r \leftarrow r+y_i^t\alpha_t\)
\EndFor
\State \textbf{Output:} \(r\in\mathbb{R}^{d_i}\)
\end{algorithmic}
\vspace{-0.3em}
\noindent\rule{\linewidth}{0.5pt}
\vspace{-0.4em}
\end{minipage}
\hfill
\begin{minipage}[t]{0.485\textwidth}
\noindent\rule{\linewidth}{0.5pt}
\captionof{algorithm}{Two-loop recursion (transpose) \\ for \(r=(M_i^{k})^{\top}q\) with \(q\in\mathbb{R}^{d_i}\) }
\vspace{-0.3em}
\noindent\rule{\linewidth}{0.5pt}
\vspace{-0.4em}
\label{alg:transpose-app}
\begin{algorithmic}[1]
\State \textbf{Input:} \(\{p_t,s^t,y_i^t\}_{t=k-\ell}^{k-1}\), 
\(q\), \((H_{0,i}^{k})^{\top}\)

\For{\(t = k-1, k-2, \dots, k-\ell\)}
  \State \(\alpha_t \leftarrow p_t (y_i^t)^{\top} q\)
\EndFor
\State \(r \leftarrow (H_{0,i}^{k})^{\top}q\)
\For{\(t = k-\ell, k-\ell+1, \dots, k-1\)}
  \State \(\beta_t \leftarrow p_t (s^t)^{\top} r\)
  \State \(r \leftarrow r+s^t(\alpha_t-\beta_t)\)
\EndFor
\State \textbf{Output:} \(r\in\mathbb{R}^{d}\)
\end{algorithmic}
\vspace{-0.3em}
\noindent\rule{\linewidth}{0.5pt}
\vspace{-0.4em}
\end{minipage}
\end{figure}
Thus LM-MultiLRSGA is based on the most recent \(\ell\) displacement pairs and and one rank-one base pair and its local convergence properties are obtained by extending the two-objective LM--LRSGA argument to the \(h\)-player block setting; this extension is made explicit in Appendix~\ref{app:lm-multilrsga-theory}.

\subsection{Cooperative phase: Nash bargaining with endogenous disagreement}
\label{subsec:nb}

A Nash-bargaining problem is determined by a feasible outcome set together with a
disagreement point \citep{nash1950bargaining,nash1953two,roth1979axiomatic}. In our
setting, following the endogenous choice used in \citep{cala2021nash}, the competitive
phase first provides an approximate equilibrium \(\theta^{\NE}\). 

We then define the disagreement levels as $\delta_i:=f_i(\theta^{\NE})$, for $i=1,\dots,h$.
For a candidate parameter vector \(\theta\), define the improvement over disagreement as $g_i(\theta):=\delta_i-f_i(\theta)$.
Thus \(g_i(\theta)>0\) for all \(i\) means that \(\theta\) strictly improves all losses relative to the disagreement point, i.e., it strictly Pareto-dominates \(\theta^{\NE}\) in the objective space.

Classically, the Nash-bargaining solution is sought over the improving feasible set, and
in convex settings on the Pareto frontier, by maximizing the Nash product
\[
\max_{\theta}\;\prod_{i=1}^h g_i(\theta)
=
\max_{\theta}\;\prod_{i=1}^h\bigl(\delta_i-f_i(\theta)\bigr).
\]
On the region where all gains are positive, this is equivalent to minimizing the negative
log-Nash product $-\sum_{i=1}^h \log\bigl(\delta_i-f_i(\theta)\bigr)$.
In stochastic neural training, however, explicitly enforcing \(g_i(\theta)>0\) at every step is cumbersome. We therefore replace each raw improvement inside the logarithm by a smooth positive surrogate.
Let $\spfun_{\kappa}(x):=\frac{1}{\kappa}\log(1+e^{\kappa x})$ for $\kappa>0$ and define
\begin{equation}
\label{eq:nb-loss-main}
\mathcal L_{\mathrm{NB}}(\theta)
:= -\sum_{i=1}^h
\log\Bigl(\varepsilon + \spfun_{\kappa}\bigl(\delta_i-f_i(\theta)\bigr)\Bigr).
\end{equation}
where \(\varepsilon>0\) is a small numerical constant. 
When the gains are safely positive, \(\spfun_{\kappa}(g_i)\approx g_i\), so
\eqref{eq:nb-loss-main} behaves like the standard negative log-Nash product, while it remains well defined also outside the improving region.
In the stochastic implementation, \eqref{eq:nb-loss-main} is evaluated at the current iterate \(\theta^k\), namely with \(g_i^k:=g_i(\theta^k)=\delta_i-f_i(\theta^k)\).
We optimize \eqref{eq:nb-loss-main} with HalpernSGD
\citep{foglia2024halpernsgd,colao2026optimizer,colao2026halpernsgd}. It is an anchored stochastic
gradient method: besides the current iterate, it uses a fixed reference point \(u\), termed 'anchor', which remains fixed throughout training. For a convex \(L\)-smooth objective $\mathcal L$,
with nonempty solution set \(S=\arg\min \mathcal L\), the Halpern mechanism selects the
anchor-projected minimizer \(P_S(u)\), i.e., the minimizer closest to \(u\). Its general
stochastic iteration is
\(
\theta^{k+1}
=
\alpha_k u
+
(1-\alpha_k)
\Bigl(
\theta^k-\eta_k\,\widehat{\nabla}\mathcal L(\theta^k)
\Bigr),
\)
where \(\widehat{\nabla}\mathcal L(\theta^k)\) denotes the mini-batch gradient of the
loss \(\mathcal L\). 
Under the assumptions stated in \citep{colao2026halpernsgd}, namely the standard
Halpern conditions on \(\{\alpha_k\}\), Robbins--Monro-type conditions and coupling
requirements on \(\{\eta_k\}\), a martingale-difference stochastic gradient noise, and a
weighted square-summability condition on the conditional noise variance, the stochastic
iterates converge almost surely to \(P_S(u)\). This variance-control condition is milder
than the common uniform bounded-variance assumption and an explicit class of admissible schedules satisfying the convergence assumptions is
\[
\alpha_k=\frac{1}{k+1},
\qquad
\eta_k =\frac{\eta_0}{(k+1)^{\rho}},
\quad \text{with }
\rho\in\left(\frac12,1\right).
\]
These are the schedules used in our experiments. Beyond these convergence
properties, HalpernSGD has also been shown experimentally to reduce training time and the associated energy/carbon cost with respect to classical SGD while preserving comparable predictive performance \citep{foglia2024halpernsgd,colao2026halpernsgd}.

In our nonconvex setting, this convex theory only motivates the anchor choice. In particular we choose both the initial point and the anchor as the competitive
equilibrium, $\theta^0=u=\theta^{\NE}$, and we take \(\mathcal L=\mathcal L_{\mathrm{NB}}\). Therefore the actual phase-2 iteration is
\begin{equation}
\label{eq:halpern-main}
\theta^{k+1}
=
\alpha_k\,\theta^{\NE}
+
(1-\alpha_k)
\Bigl(
\theta^k-\eta_k\,\widehat{\nabla}\mathcal L_{\mathrm{NB}}(\theta^k)
\Bigr),
\qquad
\theta^0:=\theta^{\NE}.
\end{equation}
This choice ties the bargaining refinement to the same point that defines the
endogenous disagreement levels \(\delta_i=f_i(\theta^{\NE})\), stabilizing the transition
from the competitive phase to the Nash-bargaining phase.

\section{Experiments on hard-constrained domain-decomposed PINNs}
\label{sec:pinn_experiments}

This section evaluates the proposed two-phase optimizer on a controlled
multi-objective PINN benchmark. The goal is to isolate the optimization mechanism from the enforcement of physical constraints. Therefore, initial and boundary conditions are imposed by construction, while the active multi-objective problem is defined only by three homogeneous local PDE residual losses. We evaluated the optimizer on two problems: training a PINN-based model, which allows us to assess performance from a behavioral standpoint during the training process, and a GAN-based approach, where the qualitative outcome of the trained model is analyzed. Although limited, these experiments provide an initial assessment of the effectiveness of the proposed approach and serve as a starting point for further analyzes with more complex case studies.

The experiments were performed on a machine equipped with an NVIDIA GeForce RTX 3090 GPU with 24 GB of dedicated memory, a 12-core AMD Ryzen 9 5900X processor, 64 GB of RAM, and Fedora Linux as the operating system. The code to reproduce our results is available at this link\footnote{Link will be available for the camera-ready version.}.

\subsection{Problem setting}
\label{subsec:pinn_problem_setting}

We consider the one-dimensional viscous Burgers equation
\(u_t+u\,u_x-\nu u_{xx}=0\) on \([0,1]\times[-1,1]\), with
\(\nu=0.01/\pi\), initial condition \(u(0,x)=u_0(x)=-\sin(\pi x)\), and homogeneous boundary conditions \(u(t,\pm1)=0\). 
Following the classical trial-solution construction of ~\citep{lagaris1998artificial}
and the recent use of algebraic inclusion of boundary/initial conditions in PINNs~\citep{ren2024improving},
we impose the initial and boundary conditions by construction through
$ u_\theta(t,x)=(1-t)u_0(x)+t(1-x^2)N_\theta(t,x)$.
Indeed, this gives \(u_\theta(0,x)=u_0(x)\) and \(u_\theta(t,\pm1)=0\) identically;
therefore, no initial-condition or boundary-condition loss is optimized. The active training objectives are only the PDE residual losses. The PDE residual is defined as
\( r_\theta(t,x)=\partial_tu_\theta+u_\theta\partial_xu_\theta-\nu\partial_{xx}u_\theta\)
 and we split the spatial domain into \[\Omega_1=\{x<-\tfrac{1}{3}\},\quad
\Omega_2=\{-\tfrac{1}{3}\le x<\tfrac{1}{3}\},\quad
\Omega_3=\{x\ge\tfrac{1}{3}\}, \] and minimize the three local residual objectives
\[
    L_i(\theta)=\frac{1}{|\Omega_i|}
    \sum_{(t,x)\in\Omega_i} r_\theta(t,x)^2,
    \quad i=1,2,3.
\] 
We denote these losses by
\((\Lleft,\Lcenter,\Lright)=(L_1,L_2,L_3)\) and their sum by  
\(\Lsum\). The sum of the losses facilitates a more straightforward comparison.
Moreover, to ensure numerical stability of the log-softplus computation, we use the value of \textit{5} for the parameter \(\kappa\) in Equation \eqref{eq:nb-loss-main} across all experiments.

\subsection{Hard-constrained partition-of-unity PINN}
\label{subsec:pinn_model}


The neural correction term \(N_\theta\) is modeled as a smooth partition-of-unity mixture of three local experts,
\[
    N_\theta(t,x)=\sum_{i=1}^3 g_i(x)E_i(t,x),
    \qquad
    g_i(x)=
    \frac{\exp\{-\beta(x-c_i)^2\}}
    {\sum_{j=1}^3\exp\{-\beta(x-c_j)^2\}},
\]
with centers \((c_1,c_2,c_3)=(-2/3,0,2/3)\) and \(\beta=12\). The gates satisfy
\(g_i(x)\geq0\) and \(\sum_i g_i(x)=1\), so the three experts are smoothly blended
without introducing discontinuities at the subdomain interfaces.

Each expert \(E_i\) is a fully connected \(\tanh\) MLP with architecture
\(2\to15\to15\to15\to1\). Denoting by \(\theta_i\) the parameters of the
\(i\)-th expert, the full parameter vector is
\(\theta=(\theta_1,\theta_2,\theta_3)\). This block structure is used only by the
competitive phase of our optimizer, where each expert block is interpreted as one
player. In the cooperative Halpern phase, and for all baselines, the same architecture is optimized as a single global PINN.

\subsection{Optimizers and training protocol}
\label{subsec:pinn_training_protocol}

The proposed method is compared with PCGrad, MultiAdam, and DCGD (all variants: DCGD-center, DCGD-avg, and DCGD-proj). All methods use the same architecture, the same three local residual objectives, the same 
mini-batch protocol, and the same training epochs.

For the proposed method, the first phase treats the three expert blocks as three
players and solve the competitive problem:
\begin{equation}
    \min_{\theta_1} \Lleft(\theta_1,\theta_2,\theta_3),
    \qquad
    \min_{\theta_2} \Lcenter(\theta_1,\theta_2,\theta_3),
    \qquad
    \min_{\theta_3} \Lright(\theta_1,\theta_2,\theta_3).
    \label{eq:pinn_competitive_game}
\end{equation}
thereby obtaining an approximate Nash equilibrium denoted by \(\theta^{\NE}\).
The LM--MultiLRSGA phase is run with fixed parameters $\eta_{\mathrm{LM}}$, $\tau_{\mathrm{LM}}$ and $memory\_history$, i.e. the $\ell$ parameter in the Section \ref{subsec:lmmultilrsga}.

The switch to the second phase is triggered by a practical Nash stationarity
criterion: for a prescribed tolerance
\(\varepsilon_{\mathrm{Nash}}>0\), called the Nash target, we switch as soon as
\begin{equation}
    \max_{i=1,\ldots,3}
    \operatorname{RMS}\!\left(\nabla_{\theta_i} L_i(\theta)\right)
    =
    \max_{i=1,\ldots,3}
    \frac{\left\|\nabla_{\theta_i} L_i(\theta)\right\|_2}{\sqrt{d_i}}
    \leq
    \varepsilon_{\mathrm{Nash}},
    \label{eq:pinn_switch_criterion}
\end{equation}
where \(d_i\) is the dimension of the parameter block \(\theta_i\). This rule checks only the first-order necessary condition \(F(\theta)\approx 0\) in block RMS norm; it does not ensure second-order stability or the coercivity condition \(S^\ast\succ0\), whose
evaluation would require Hessian information. Then the obtained point $\theta^{\NE}$ is more precisely a practical Nash-stationary reference point.

The resulting point \(\thetane\)
is used, in the second phase, both as initialization and as the Halpern anchor. Moreover, the local losses at
\(\thetane\) define the disagreement point $\delta_i = L_i(\thetane)$ for $i=1,2,3.$

For the Halpern phase, we use the decreasing stepsize sequence
\(\eta_k=\eta_0/(k+1)^{0.5001}\), where \(k\) denotes the training step and
\(\eta_0\in\{10^{-2},10^{-3},10^{-4}\}\).
We remark that these initial values \(\eta_0\) are swept only for the second phase of the proposed
method, and for fairness, the same decreasing sequence and the same initial values \(\eta_0\)
are also used for the SGD-type baselines, namely PCGrad and the DualCone variants.
MultiAdam is run with the same nominal initial values, but keeps its internal adaptive dynamics. \\
The training set consists of $j^2=900$ interior points $(t,x)\in(0,1]\times(-1,1)$ with $j=30$, split into three spatial subdomains for the residual losses and sampled using stratified mini-batches of size $300$ with $100$ points per subdomain, while the test set is built analogously with $j_{\mathrm{test}}=45$, yielding $45^2=2025$ interior points.
Training curves are reported as epoch-averaged mini-batch
losses.

\subsection{Experiment 1: Main Benchmark}
\label{subsec:pinn_exp1_main_benchmark}

We compare the proposed method with PCGrad, MultiAdam, and the three DualCone
variants for the learning-rate values
\(\eta_0\in\{10^{-2},10^{-3},10^{-4}\}\)
and for the first phase of the proposed method we set $\eta_{\mathrm{LM}}=0.1$, $\tau_{\mathrm{LM}}=0.01$ and $memory\_history=3$. All methods are trained for $500$ iterations, run over $10$ different seeds and evaluated on the held-out test grid.
 For HalpernSGD via LM--MultiLRSGA, this budget is split as
\(
\text{phase-1 LM--MultiLRSGA steps}
+
\text{phase-2 HalpernSGD steps}
=
500 \).

The switch from phase 1 to phase 2 follows \eqref{eq:pinn_switch_criterion} with \(  \varepsilon_{\mathrm{Nash}}=10^{-2}\), which is reached after
\(35.5\pm18.1\) iterations, equivalently \(11.8\pm6.0\) mini-batch epochs, on
average over the ten seeds.

Table~\ref{tab:pinn_exp1_test_losses} reports test losses as
mean \(\pm\) standard deviation over ten seeds. The column $\Delta$ contains the percentage differences relative to the best value achieved for $\Lsum$ for each learning rate.

\begin{table}
\centering
\caption{
Test losses for the hard-constrained Burgers PINN benchmark.
All values are reported as mean \(\pm\) standard deviation over ten seeds.
Lower values are better. The column $\Delta$ contains the percentage difference relative to the best value achieved for $\Lsum$.
}
\label{tab:pinn_exp1_test_losses}
\resizebox{\textwidth}{!}{%
\begin{tabular}{llccccc}
\toprule
LR & Method  & \(\Lleft\) & \(\Lcenter\) & \(\Lright\) & \(\Lsum\) & $\Delta$ (best) \\
\midrule
\(10^{-2}\) & \methodours & \(0.1859\pm0.0705\) & \(0.8123\pm0.0292\) & \(0.1551\pm0.0633\) & \(1.1533\pm0.0797\) & \(0.0\%\) \\
\(10^{-2}\) & PCGrad & \(0.2455\pm0.0855\) & \(0.8515\pm0.0382\) & \(0.1180\pm0.0158\) & \(1.2151\pm0.0846\) & \(-5.4\%\) \\
\(10^{-2}\) & MultiAdam & \(0.4082\pm0.2926\) & \(0.8028\pm0.1645\) & \(0.5137\pm0.2062\) & \(1.7247\pm0.3459\) & \(-49.5\%\) \\
\(10^{-2}\) & DualCone center & \(0.2012\pm0.0829\) & \(0.8511\pm0.0368\) & \(0.2122\pm0.0772\) & \(1.2646\pm0.0974\) & \(-9.6\%\) \\
\(10^{-2}\) & DualCone avg & \(0.3151\pm0.0564\) & \(0.9517\pm0.0346\) & \(0.3161\pm0.0945\) & \(1.5829\pm0.1589\) & \(-37.2\%\) \\
\(10^{-2}\) & DualCone proj & \(0.3129\pm0.0575\) & \(0.9444\pm0.0413\) & \(0.3147\pm0.0927\) & \(1.5720\pm0.1621\) & \(-36.3\%\) \\
\midrule
\(10^{-3}\) & \methodours & \(0.2509\pm0.1056\) & \(0.8529\pm0.0350\) & \(0.1889\pm0.1144\) & \(1.2927\pm0.0725\) & \(0.0\%\) \\
\(10^{-3}\) & PCGrad & \(0.5248\pm0.0993\) & \(0.9604\pm0.0471\) & \(0.3878\pm0.1307\) & \(1.8730\pm0.1672\) & \(-44.9\%\) \\
\(10^{-3}\) & MultiAdam & \(0.5160\pm0.3818\) & \(0.7056\pm0.1096\) & \(0.7054\pm0.5435\) & \(1.9270\pm0.7098\) & \(-49.1\%\) \\
\(10^{-3}\) & DualCone center & \(0.5651\pm0.1765\) & \(0.9303\pm0.0506\) & \(0.4774\pm0.1559\) & \(1.9727\pm0.1857\) & \(-52.6\%\) \\
\(10^{-3}\) & DualCone avg & \(0.6602\pm0.2543\) & \(0.9649\pm0.0558\) & \(0.5529\pm0.1964\) & \(2.1779\pm0.2774\) & \(-68.5\%\) \\
\(10^{-3}\) & DualCone proj & \(0.6625\pm0.2535\) & \(0.9622\pm0.0585\) & \(0.5558\pm0.1941\) & \(2.1805\pm0.2758\) & \(-68.7\%\) \\
\midrule
\(10^{-4}\) & \methodours & \(0.2694\pm0.1145\) & \(0.8657\pm0.0392\) & \(0.1985\pm0.1247\) & \(1.3336\pm0.0625\) & \(0.0\%\) \\
\(10^{-4}\) & PCGrad & \(0.7573\pm0.3694\) & \(0.9652\pm0.0615\) & \(0.6161\pm0.2273\) & \(2.3385\pm0.3997\) & \(-75.3\%\) \\
\(10^{-4}\) & MultiAdam & \(0.7607\pm0.2909\) & \(0.6634\pm0.0206\) & \(0.6620\pm0.2603\) & \(2.0861\pm0.2612\) & \(-56.4\%\) \\
\(10^{-4}\) & DualCone center & \(0.7700\pm0.3910\) & \(0.9621\pm0.0619\) & \(0.6312\pm0.2307\) & \(2.3633\pm0.4113\) & \(-77.2\%\) \\
\(10^{-4}\) & DualCone avg & \(0.7878\pm0.4165\) & \(0.9678\pm0.0644\) & \(0.6452\pm0.2392\) & \(2.4008\pm0.4513\) & \(-80.0\%\) \\
\(10^{-4}\) & DualCone proj & \(0.7881\pm0.4164\) & \(0.9675\pm0.0647\) & \(0.6456\pm0.2389\) & \(2.4012\pm0.4511\) & \(-80.1\%\) \\
\bottomrule
\end{tabular}
}
\end{table}

Figure~\ref{fig:pinn_exp1_lsum_all_lr} shows the training evolution of the aggregate
residual loss \(\Lsum\). Curves are computed from the mini-batch losses used by
the optimizers and then averaged per epoch and over the ten seeds. The
corresponding local residual curves are reported in
Appendix~\ref{app:pinn_exp1_local_curves}.

\begin{figure*}[t!]
\centering

\begin{minipage}{0.8\textwidth}
\centering
\includegraphics[width=\linewidth]{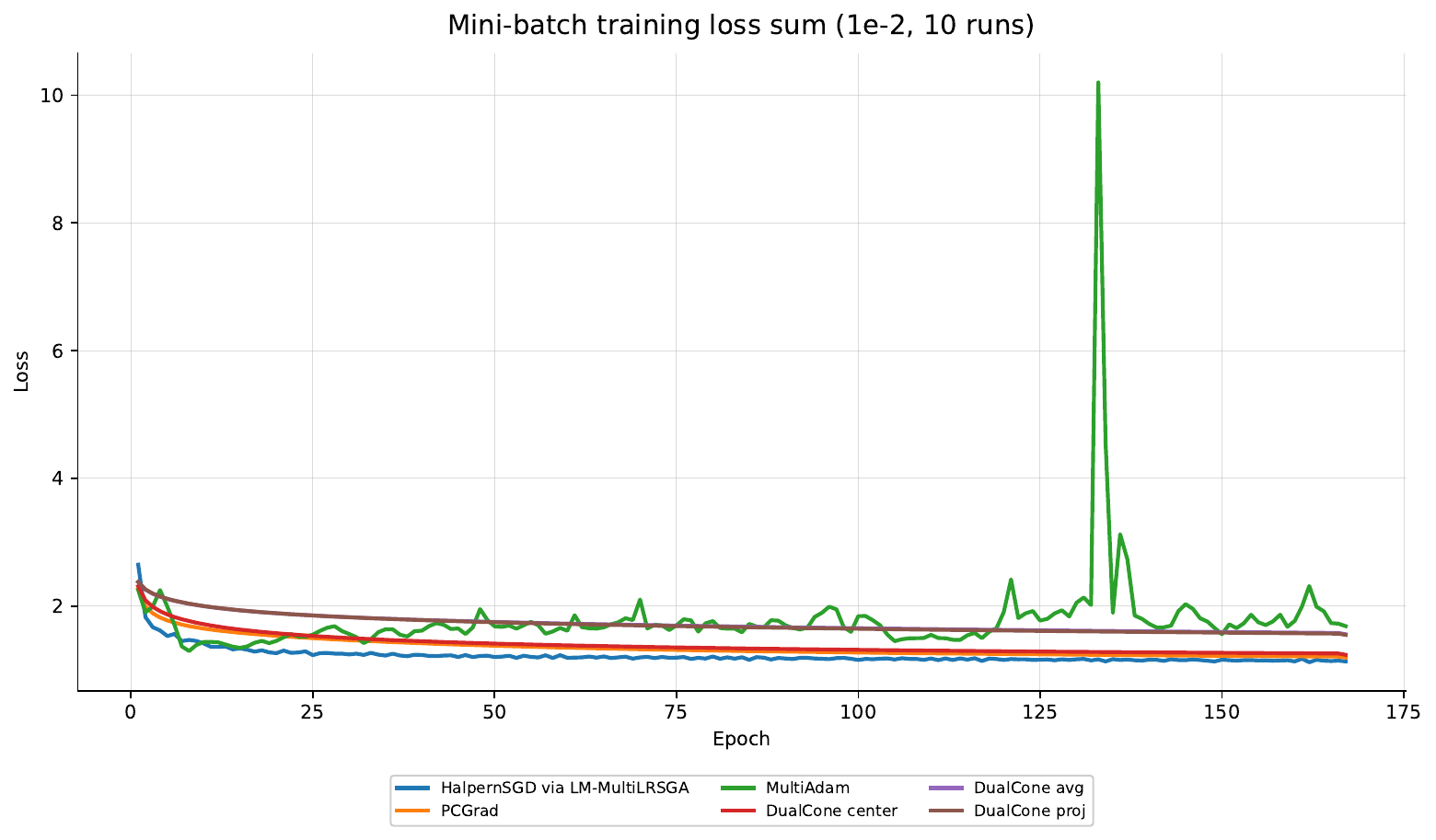}
\centerline{\small (a) \(\eta_0=10^{-2}\)}
\end{minipage}

\vspace{0.5em}

\begin{minipage}{0.49\textwidth}
\centering
\includegraphics[width=\linewidth]{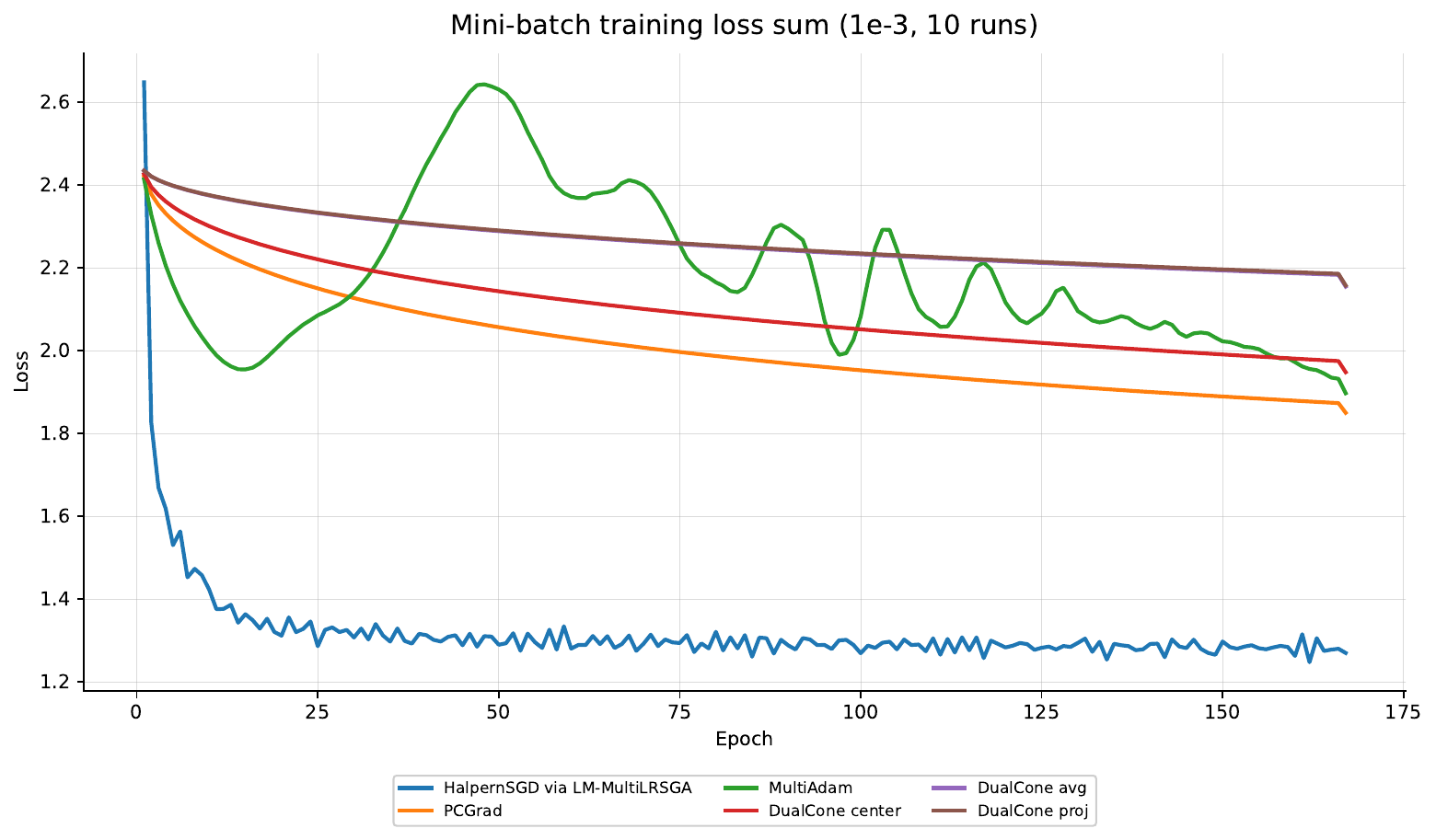}
\centerline{\small (b) \(\eta_0=10^{-3}\)}
\end{minipage}
\hfill
\begin{minipage}{0.49\textwidth}
\centering
\includegraphics[width=\linewidth]{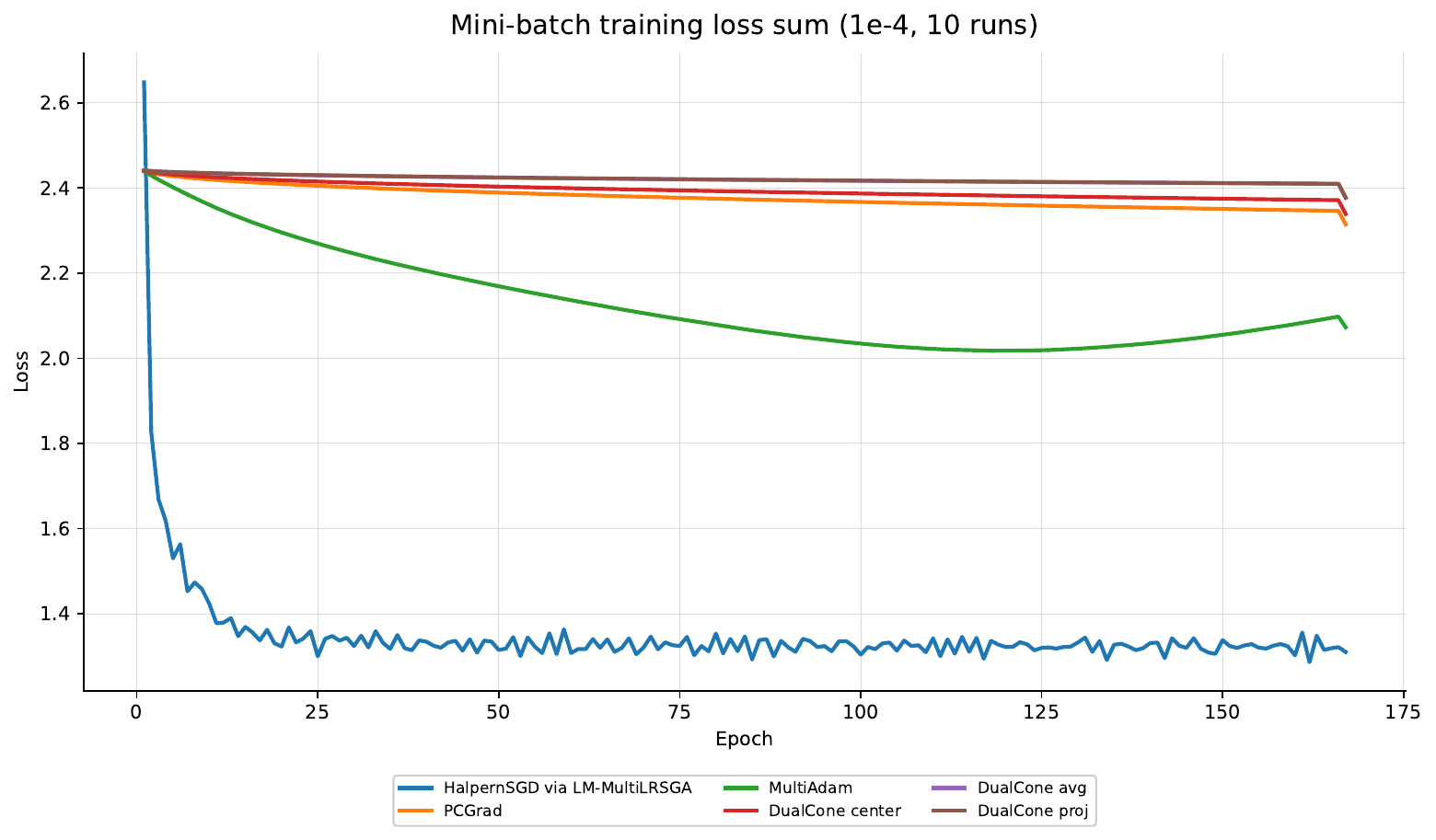}
\centerline{\small (c) \(\eta_0=10^{-4}\)}
\end{minipage}

\caption{
Experiment~1. Mean epoch-averaged mini-batch training curves for the aggregate
residual loss \(\Lsum\) over ten seeds. The proposed NB--HalpernSGD via
LM--MultiLRSGA is compared with PCGrad, MultiAdam, and DualConeGD variants
for the three initial phase-2 learning rates.
}
\label{fig:pinn_exp1_lsum_all_lr}
\end{figure*}


\subsection{Experiment 2: sensitivity to the Nash switch threshold}
\label{subsec:pinn_exp2_switch_sensitivity}

The second experiment studies the robustness of the proposed two-phase procedure with
respect to the switching threshold in \eqref{eq:pinn_switch_criterion}. The experiment uses a single seed,
so that the switch markers remain visually aligned with
the corresponding LM--MultiLRSGA trajectory.
The tested Nash stationarity thresholds are $7.5\cdot10^{-2}$, $5\cdot10^{-2}$, $10^{-2}$, $7.5\cdot10^{-3}$ and $5\cdot10^{-3}$.
Before fixing the phase-1 parameters for this diagnostic, we inspected the LM--MultiLRSGA-only dynamics for
\(\eta_{\mathrm{LM}}\in\{0.1,0.05,0.01\}\) with
\(\tau_{\mathrm{LM}}=\eta_{\mathrm{LM}}/10\). The three choices exhibit comparable
qualitative trends, although the largest step size naturally produces more oscillatory
trajectories. This is not problematic in the main benchmark, where the role of phase 1
is only to reach the practical Nash stationarity condition as quickly as possible. For the
switch-sensitivity plot, however, we use the less oscillatory setting
\(\eta_{\mathrm{LM}}=10^{-2}\) and \(\tau_{\mathrm{LM}}=10^{-3}\), so that the
effect of changing the Nash threshold is visually clearer. The corresponding
LM--MultiLRSGA-only step-size diagnostic is reported in
Appendix~\ref{app:pinn_lm_eta_diagnostic}.

In this diagnostic experiment, the Halpern phase is initialized with a learning rate of \(10^{-2}\). Figure \ref{fig:pinn_exp2_switch_lsum} also reports the corresponding LM--MultiLRSGA-only trajectory as a reference baseline. The curves are obtained by applying a moving average with window size 20 to the mini-batch losses, while circle markers denote the switching points between the phases. The plot further indicates that the method is robust with respect to the choice of Nash target, since all trajectories converge to quite similar values.

The purpose of this experiment is not to provide a second benchmark against baselines,
but to verify that the final Halpern refinement is not overly sensitive to the exact
moment at which the approximate Nash point is used as disagreement point and anchor.
The local residual components for this diagnostic are reported in
Appendix~\ref{app:pinn_exp2_local_curves}.

\begin{figure}
\centering
\includegraphics[width=0.70\linewidth]{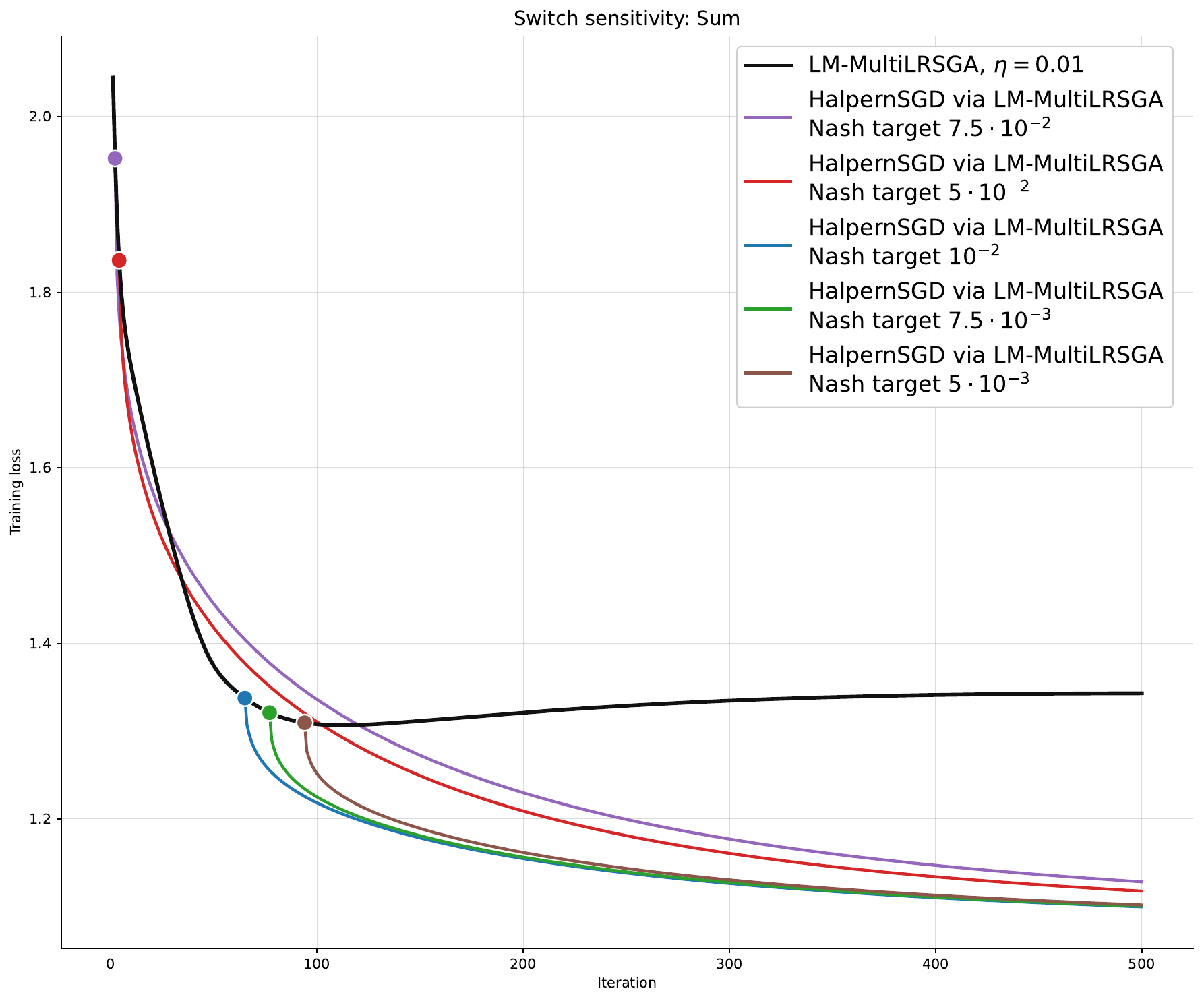}
\caption{
Experiment~2. Switch-sensitivity diagnostic for the aggregate residual loss
\(\Lsum\). Each colored curve corresponds to a different Nash stationarity target.
Markers indicate the switch from LM--MultiLRSGA to the Halpern phase.
}
\label{fig:pinn_exp2_switch_lsum}
\end{figure}

\section{Experiment on GANs}
\label{sec:experiments}

We test the effectiveness of the method on the two-objective optimization problem of training a GAN model.
In this setting, the learning dynamics can be naturally interpreted through a multi-objective optimization lens, where the generator and discriminator define two competing objectives that must be optimized simultaneously. The generator seeks to minimize a divergence between the generated and real data distributions, while the discriminator aims to maximize its ability to distinguish real from synthetic samples. This adversarial interplay induces a non-cooperative, non-convex optimization problem in which improving one objective may degrade the other, leading to well-known challenges such as cycling, instability, and sensitivity to hyperparameters. From a multi-objective perspective, the goal is not to optimize a single scalar loss, but rather to reach a balanced solution corresponding to a stable equilibrium where neither player can locally improve its own objective by a unilateral deviation while the other player's parameters are held fixed. 
For this experiment, we train a GAN model on the MNIST dataset.
 The architecture, dataset, and phase-1 adversarial setup follow the LM--LRSGA GAN experiment of~\citep{foglia2026lmlrsga}; here we use this setting only to evaluate the additional NB--HalpernSGD refinement.

We evaluated our optimizer using the Fréchet Inception Distance (FID) score \citep{heusel2017gans} and the training loss curves.
The results in Figure \ref{fig:gan_exp_loss} highlight consistent improvements in GAN training dynamics when incorporating HalpernSGD within the LM-MultiLRSGA framework. As shown in the FID curves, both methods exhibit a rapid decrease in FID during early epochs, indicating efficient initial learning due to LM-MultiLRSGA algorithm; however, the HalpernSGD refinement, activated at epoch 40, achieves a consistently lower FID and reaches the best final score.
This suggests improved sample quality and more effective exploration of the generator’s parameter space. In addition to this, the loss trajectories reveal stable and well-balanced adversarial training for both approaches, with generator losses gradually decreasing and discriminator losses approaching a steady plateau. Notably, the HalpernSGD variant maintains slightly lower generator loss and smoother convergence, indicating reduced oscillatory behavior, a common issue in GAN optimization. Together, these observations suggest that the proposed method improves both convergence stability and generative performance, supporting the role of the second phase as a refinement over the phase-1-only baseline in the
two-player setting.

\begin{figure}[t]
    \centering
    \begin{subfigure}[t]{0.49\linewidth}
        \centering
        \includegraphics[width=\linewidth]{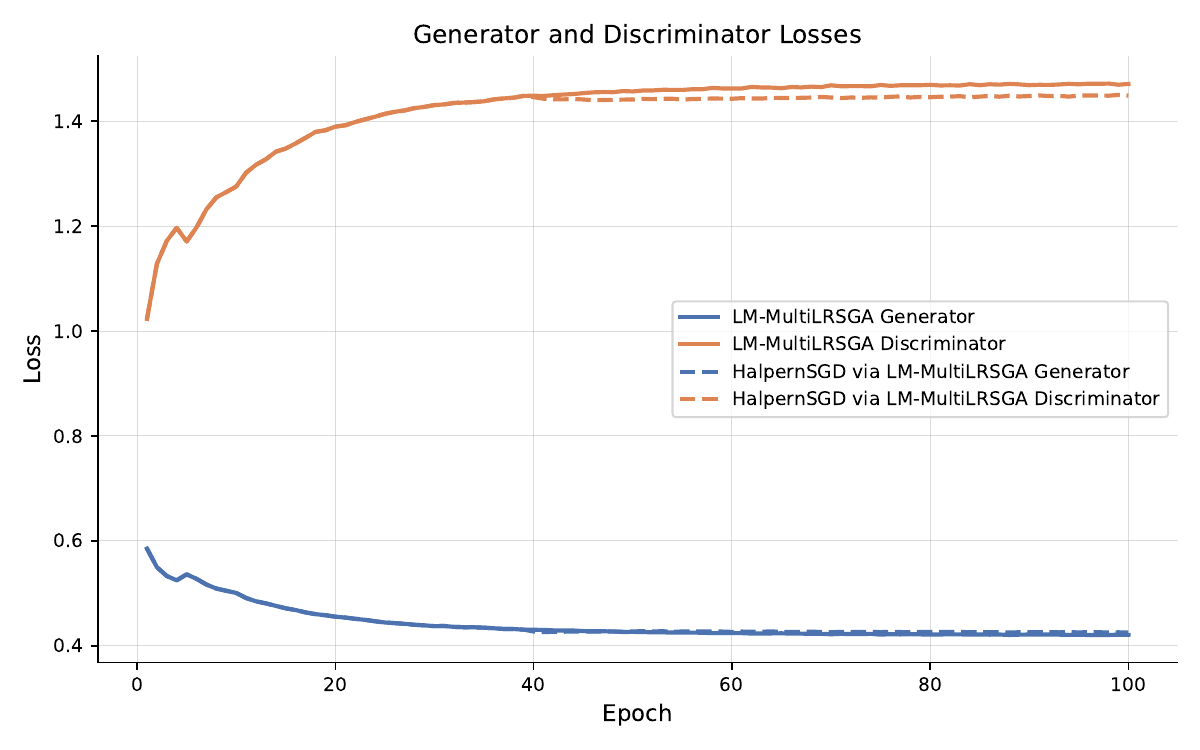}
        \centerline{\small (a)}
    \end{subfigure}
    \hfill
    \begin{subfigure}[t]{0.49\linewidth}
        \centering
        \includegraphics[width=\linewidth]{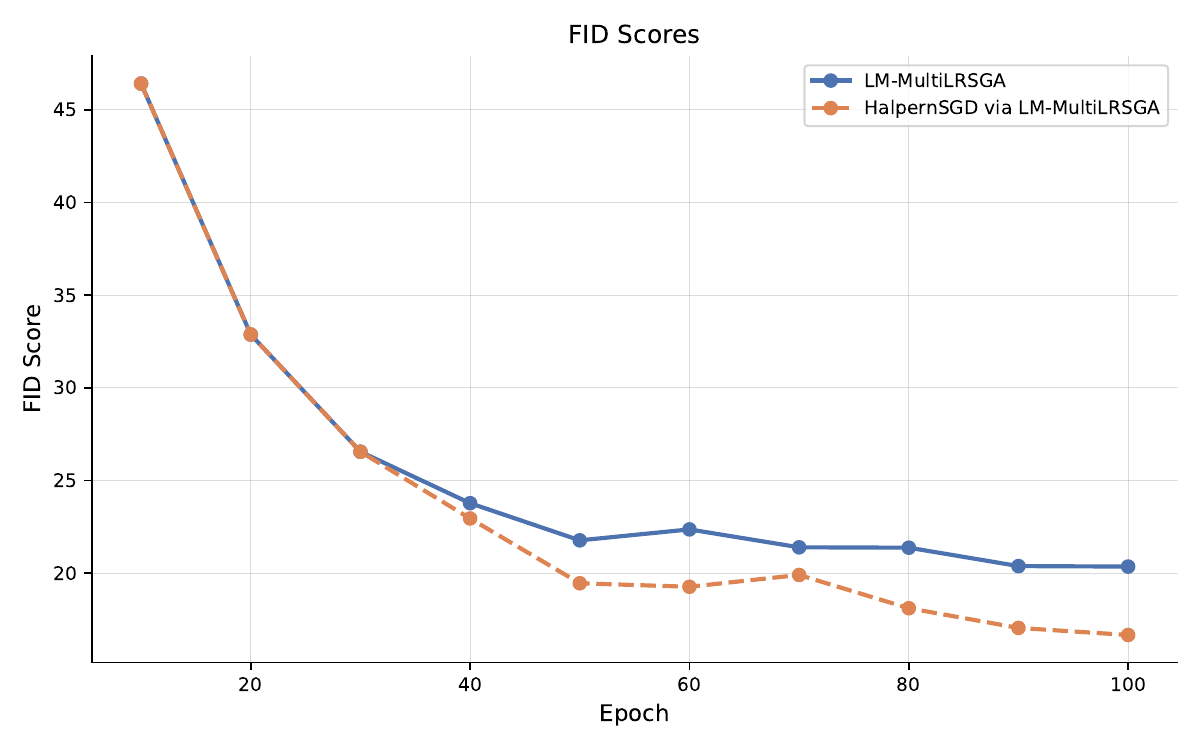}

        \centerline{\small (b)}
    \end{subfigure}

    \caption{Generator and discriminator loss trajectories and FID scores over training epochs on MNIST. Both methods exhibit stable adversarial dynamics, while NB--HalpernSGD via LM--MultiLRSGA achieves smoother convergence and consistently lower FID values, indicating faster convergence and improved sample quality.}
    \label{fig:gan_exp_loss}
\end{figure}

\section{Conclusion}
\label{sec:conclusion}

We introduced NB--HalpernSGD via LM--MultiLRSGA, a two-phase optimizer for block structure multi-objective learning. The first phase aims to compute an approximate stable Nash equilibrium through the proposed limited-memory variant of MultiLRSGA, which reduces the memory footprint of the MultiLRSGA skew-correction mechanism while retaining its game-aware structure. The second phase uses this equilibrium as both an endogenous disagreement point and the HalpernSGD anchor, yielding a Nash-bargaining refinement of the original objectives.
The hard-constrained PINN benchmark shows that the proposed method improves over PCGrad, MultiAdam, and DualConeGD across all tested learning-rate regimes, while the GAN experiment provides further evidence of smoother two-objective dynamics and improved FID behavior.
The local LM--MultiLRSGA convergence analysis, detailed in the appendix, shows how the two-objective LM--LRSGA argument extends to the \(h\)-player setting through the MultiLRSGA block estimate. 
Future work will focus on sharpening the LM--MultiLRSGA convergence theory,
including a finer characterization of the role of the memory length \(\ell\) and of the admissible stepsize region. On the experimental side, we plan to extend
the benchmark campaign to larger-scale multi-task learning models.

\bibliographystyle{plainnat}
\bibliography{references}

\clearpage
\newpage
\appendix
\section{Limited-memory MultiLRSGA: derivation and local theory}
\label{app:lm-multilrsga-theory}

This appendix makes explicit the local convergence argument for the
LM--MultiLRSGA phase introduced in Section~\ref{sec:method}. 
The analysis is deterministic and is stated for exact secant pairs. The EMA variant used in
stochastic training is a practical variance-reduction modification and is not claimed to be covered
by the deterministic convergence theorem below.
The proof
strategy follows the two-objective LM--LRSGA analysis of
\citep{foglia2026lmlrsga}. The main new technical point is the passage
from two objectives to \(h\) players, which is handled through the block estimate
of full-memory MultiLRSGA in \citep{foglia2026multilrsga}.

Throughout this appendix, we adopt the following notation:
\[
F(\theta)=
\bigl(
\partial_{\theta_1}f_1(\theta),\ldots,
\partial_{\theta_h}f_h(\theta)
\bigr)\in\mathbb R^d,
\quad
H(\theta)=DF(\theta).
\]
At a stable Nash equilibrium \(\theta^{\NE}\), set
\[
H^*:=H(\theta^{\NE})=S^*+A^*,
\qquad
S^*:=\frac12(H^*+H^{*\top}),
\qquad
A^*:=\frac12(H^*-H^{*\top}).
\]
Thus \(A^*=-A^{*\top}\). For the spectral part of the argument we assume
\(S^*=S^{*\top}\succ0\), as in the local spectral condition used in
\citep{foglia2026lmlrsga}.

For a fixed history length \(\ell\), denote by \(M_{i,k}^{(\ell)}\) the
limited-memory approximation of \(M_i^k\). We recall that, for every $i \in \{1, \dots h\}$, the matrices \(M_{i,k}^{(\ell)}\) are obtained from the last \(\ell\)
pairs $s^t:=\theta^{t+1}-\theta^t\in\mathbb R^d$ and $y_i^t:=
\partial_{\theta_i}f_i(\theta^{t+1}) -\partial_{\theta_i}f_i(\theta^t)\in\mathbb R^{d_i}$ with \(p_t:=((s^t)^\top s^t)^{-1}\) and
\(V_t:=I-p_t s^t(s^t)^\top\).

Replacing the unavailable matrix
\(M_i^{k-\ell}\) in \eqref{FormuleRicorsiveLimitate} by \(H_{0,i}^k\), we obtain
\[
\begin{aligned}
M_{i,k}^{(\ell)}
&=
H_{0,i}^k(V_{k-\ell}\cdots V_{k-1})
+
\sum_{t=k-\ell}^{k-1}
p_t\,y_i^t(s^t)^\top(V_{t+1}\cdots V_{k-1}),\\
(M_{i,k}^{(\ell)})^\top
&=
(V_{k-1}\cdots V_{k-\ell})(H_{0,i}^k)^\top
+
\sum_{t=k-\ell}^{k-1}
p_t\,(V_{k-1}\cdots V_{t+1})s^t(y_i^t)^\top .
\end{aligned}
\]
Recall that here an empty product is the identity and
\(
H_{0,i}^k
:=
\frac{y_i^{k-\ell-1}(s^{k-\ell-1})^\top}
{(s^{k-\ell-1})^\top s^{k-\ell-1}} .
\)

The limited-memory skew correction is the block matrix
\(\overline A_k^{(\ell)}\in\mathbb R^{d\times d}\) defined by
\[
[\overline A_k^{(\ell)}]_{ii}=0,
\qquad
[\overline A_k^{(\ell)}]_{ij}
=
\frac12
\Bigl(
[M_{i,k}^{(\ell)}]_j
-
([M_{j,k}^{(\ell)}]_i)^\top
\Bigr),
\qquad i\neq j .
\]
It is block-skew-symmetric by construction. The LM--MultiLRSGA iteration can
therefore be written as
\[
\theta^{k+1}
=
\theta^k-\eta\bigl(I-\tau\overline A_k^{(\ell)}\bigr)F(\theta^k).
\]

\paragraph{Imported spectral facts.}
We first recall two results from the LM--LRSGA analysis
\citep{foglia2026lmlrsga}. They are purely spectral and dimension-independent,
so they apply to the present \(d\)-dimensional \(h\)-player game.

\begin{lemma}[Symmetric-part identity, Lemma 3.1 of \citep{foglia2026lmlrsga}]
\label{lem:app-sym-identity}
Let \(H^*=S^*+A^*\), with \(S^*=S^{*\top}\succ0\) and
\(A^*=-A^{*\top}\). For every skew-symmetric matrix \(K=-K^\top\) and every
\(\tau>0\),
\[
\operatorname{sym}\bigl((I-\tau K)H^*\bigr)
=
S^*
+\frac{\tau}{2}(S^*K-KS^*)
-\frac{\tau}{2}(KA^*+A^*K).
\]
\end{lemma}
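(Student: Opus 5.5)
The identity is a direct algebraic computation, so I will simply expand the product and symmetrize it term by term. Write \(\operatorname{sym}(X):=\tfrac12(X+X^\top)\). Expanding \((I-\tau K)H^*=(I-\tau K)(S^*+A^*)\) gives
\[
(I-\tau K)H^* = S^* + A^* - \tau K S^* - \tau K A^* .
\]
Since \(\operatorname{sym}\) is linear, I will handle the four terms one at a time, using only \(S^{*\top}=S^*\), \(A^{*\top}=-A^*\) and \(K^\top=-K\).

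The first two terms are immediate. We have \(\operatorname{sym}(S^*)=S^*\), and \(\operatorname{sym}(A^*)=\tfrac12(A^*+A^{*\top})=0\).

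For the third term, \((KS^*)^\top=S^{*\top}K^\top=-S^*K\). Hence
\[
\operatorname{sym}(-\tau KS^*)=-\tfrac{\tau}{2}(KS^*-S^*K)=\tfrac{\tau}{2}(S^*K-KS^*).
\]

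For the fourth term, \((KA^*)^\top=A^{*\top}K^\top=(-A^*)(-K)=A^*K\). Hence
\[
\operatorname{sym}(-\tau KA^*)=-\tfrac{\tau}{2}(KA^*+A^*K).
\]

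Adding the four contributions yields exactly the claimed identity. The argument never uses \(S^*\succ0\) or \(\tau>0\); those hypotheses only matter later, in the spectral estimates that invoke this lemma.

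The only place needing care is the sign bookkeeping in the fourth term. There the two minus signs from the skew-symmetry of \(K\) and of \(A^*\) cancel, which is why that term produces an anticommutator rather than a commutator. I will state that transposition explicitly so the sign can be checked at a glance.
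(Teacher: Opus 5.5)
Your proof is correct: expanding $(I-\tau K)(S^*+A^*)$ and symmetrizing term by term, using $(KS^*)^\top=-S^*K$ and $(KA^*)^\top=A^*K$, gives exactly the stated identity, and you are right that $S^*\succ0$ and $\tau>0$ are not needed. The paper imports this lemma from the LM--LRSGA analysis without reproducing a proof, and your direct expansion is the natural argument for it.
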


\begin{proposition}[Frozen spectral stability, Proposition 3.2 of \citep{foglia2026lmlrsga}]
\label{prop:app-frozen-stability}
Let \(\mathcal A_k=-\mathcal A_k^\top\) be a skew correction and set
\[
\Delta_k:=\mathcal A_k-A^*,
\qquad
G_k:=(I-\tau\mathcal A_k)H^* .
\]
Consider the \(k\)-frozen map
\(
T_k(\theta):=\theta-\eta(I-\tau\mathcal A_k)F(\theta)\), whose Jacobian is
$DT_k(\theta^{\NE})=I-\eta G_k$.
If
\[
0<\tau<
\frac{2\lambda_{\min}(S^*)}
{
\|S^*A^*-A^*S^*\|_2
+
\|S^*\Delta_k-\Delta_kS^*\|_2
+
\|\Delta_kA^*+A^*\Delta_k\|_2
},
\]
then there exists \(\bar\eta_k>0\) such that, for every
\(0<\eta<\bar\eta_k\),
it holds $\rho\!\left(DT_k(\theta^{\NE})\right)<1 $.
\end{proposition}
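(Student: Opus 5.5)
The plan is to bound the real parts of the eigenvalues of \(G_k=(I-\tau\mathcal A_k)H^*\) from below by a positive constant. Once that is done, the claim follows from a first-order perturbation argument in \(\eta\).

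\textbf{Step 1: positive definiteness of the symmetric part.} Apply Lemma~\ref{lem:app-sym-identity} with \(K=\mathcal A_k\), which is skew-symmetric by hypothesis. Writing \(\mathcal A_k=A^*+\Delta_k\) gives
\[
\operatorname{sym}(G_k)=S^*+\tfrac{\tau}{2}\bigl[(S^*A^*-A^*S^*)+(S^*\Delta_k-\Delta_kS^*)\bigr]-\tfrac{\tau}{2}\bigl[2A^{*2}+(\Delta_kA^*+A^*\Delta_k)\bigr].
\]
The term \(-\tau A^{*2}=\tau A^{*\top}A^*\) is positive semidefinite, so it can be dropped in a lower bound. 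The three remaining perturbation terms are bounded in operator norm by the triangle inequality. This gives
\[
\lambda_{\min}\bigl(\operatorname{sym}(G_k)\bigr)\ \ge\ \lambda_{\min}(S^*)-\tfrac{\tau}{2}\Bigl(\|S^*A^*-A^*S^*\|_2+\|S^*\Delta_k-\Delta_kS^*\|_2+\|\Delta_kA^*+A^*\Delta_k\|_2\Bigr)=:\mu_k,
\]
and the hypothesis on \(\tau\) is exactly the condition \(\mu_k>0\).

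\textbf{Step 2: eigenvalues of \(G_k\) lie in the right half-plane.} For any eigenpair \(G_kv=\lambda v\) with \(\|v\|=1\), we have
\[
\operatorname{Re}\lambda=\operatorname{Re}(v^*G_kv)=v^*\operatorname{sym}(G_k)v\ge\mu_k>0.
\]
Hence every eigenvalue of \(G_k\) has real part at least \(\mu_k\).

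\textbf{Step 3: choice of \(\eta\).} The eigenvalues of \(DT_k(\theta^{\NE})=I-\eta G_k\) are \(1-\eta\lambda\), and
\[
|1-\eta\lambda|^2=1-2\eta\operatorname{Re}\lambda+\eta^2|\lambda|^2\le 1-2\eta\mu_k+\eta^2\|G_k\|_2^2 .
\]
Set \(\bar\eta_k:=2\mu_k/\|G_k\|_2^2\). For every \(0<\eta<\bar\eta_k\) the right-hand side is below \(1\) for every eigenvalue, which gives \(\rho(DT_k(\theta^{\NE}))<1\).

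\textbf{Where care is needed.} The main point to check is the sign bookkeeping in Step 1. Expanding \(-\tfrac{\tau}{2}(KA^*+A^*K)\) with \(K=A^*+\Delta_k\) produces \(-\tau A^{*2}\), and this is positive semidefinite because \(A^*\) is skew-symmetric. That is why this term does not appear in the denominator of the \(\tau\)-bound, and it must be handled as a favorable term rather than absorbed into a norm bound. The remaining steps are routine.
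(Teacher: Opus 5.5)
Your proof is correct and follows the route the paper signals by recalling the symmetric-part identity (Lemma~\ref{lem:app-sym-identity}) immediately before importing this proposition. You substitute $K=\mathcal A_k=A^*+\Delta_k$, keep $\tau A^{*\top}A^*\succeq 0$ as a favorable term, apply Weyl's inequality and the bound $\operatorname{Re}\lambda\ge\lambda_{\min}(\operatorname{sym}(G_k))$, and take $\eta<2\mu_k/\|G_k\|_2^2$; since the paper gives only the citation to \citep{foglia2026lmlrsga} and no proof, your explicit $\bar\eta_k$ makes the argument self-contained.
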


\paragraph{The multi-player block estimate.}
The next result is the key estimate from full-memory MultiLRSGA
\citep{foglia2026multilrsga}. It is the point where the already mentioned factor \((h-1)\) enters the analysis.

\begin{lemma}[Multi-player block estimate, Lemma 1 of \citep{foglia2026multilrsga}]
\label{lem:app-multilrsga-block}
Assume that, for some \(\delta>0\),
\[
\|M_i^k-D(\partial_{\theta_i}f_i)(\theta^{\NE})\|_2\le\delta,
\qquad i=1,\dots,h .
\]
Then the full-memory MultiLRSGA antisymmetric correction \(\widehat A_k\)
satisfies
\[
\|\widehat A_k-A^*\|_2\le(h-1)\delta .
\]
\end{lemma}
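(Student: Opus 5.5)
The plan is to compare $\widehat A_k$ and $A^*$ block by block, reduce the difference to an off-diagonal matrix built from the secant errors, and then bound that matrix by splitting it into $h-1$ "shifted block-diagonal" pieces, each of norm at most $\delta$.

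\emph{Step 1: block form of $A^*$.} Set $J_i:=D(\partial_{\theta_i}f_i)(\theta^{\NE})\in\mathbb R^{d_i\times d}$. By definition of $H^*=DF(\theta^{\NE})$, its $(i,j)$ block is $H^*_{ij}=[J_i]_j=\nabla^2_{\theta_i\theta_j}f_i(\theta^{\NE})$. Therefore
\[
A^*_{ij}=\tfrac12\bigl([J_i]_j-([J_j]_i)^\top\bigr).
\]
On the diagonal, $[J_i]_i=\nabla^2_{\theta_i\theta_i}f_i$ is symmetric by smoothness, so $A^*_{ii}=0$. This matches the full-memory correction $\widehat A_k$, which has zero diagonal blocks and off-diagonal blocks $\tfrac12([M_i^k]_j-([M_j^k]_i)^\top)$.

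\emph{Step 2: the difference.} Let $E_i:=M_i^k-J_i$, so that $\|E_i\|_2\le\delta$ by hypothesis. Define the $d\times d$ block matrix $B$ by $B_{ii}=0$ and $B_{ij}=[E_i]_j$ for $i\neq j$. Then
\[
\widehat A_k-A^*=\tfrac12\bigl(B-B^\top\bigr),
\qquad\text{hence}\qquad
\|\widehat A_k-A^*\|_2\le\tfrac12\bigl(\|B\|_2+\|B^\top\|_2\bigr)=\|B\|_2 .
\]
So it suffices to prove $\|B\|_2\le(h-1)\delta$.

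\emph{Step 3: shifted block-diagonal splitting (the main point).} The natural first attempt is to bound $B$ row-block by row-block, but stacking $h$ rows only gives a $\sqrt h$-type bound and loses the precise constant $h-1$. Instead, for $m=1,\dots,h-1$, let $C_m$ be the block matrix that keeps only the blocks $(i,\sigma_m(i))$ of $B$, where $\sigma_m(i)\equiv i+m \pmod h$. These pieces partition the off-diagonal positions, so $B=\sum_{m=1}^{h-1}C_m$.

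Each $C_m$ has exactly one nonzero block in every block row and every block column, since $\sigma_m$ is a permutation. Thus $C_m=\operatorname{diag}\bigl([E_1]_{\sigma_m(1)},\dots,[E_h]_{\sigma_m(h)}\bigr)\,\Pi_m$ for a block permutation matrix $\Pi_m$, which is orthogonal. Consequently
\[
\|C_m\|_2=\max_i\|[E_i]_{\sigma_m(i)}\|_2\le\max_i\|E_i\|_2\le\delta,
\]
because a column-block submatrix has spectral norm at most that of the whole matrix. The triangle inequality then gives $\|B\|_2\le(h-1)\delta$, which with Step 2 yields the claim.

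The only delicate points are the orthogonal-permutation factorization in Step 3, which requires care since the blocks $[E_i]_{\sigma_m(i)}$ are rectangular of size $d_i\times d_{\sigma_m(i)}$, and the use of symmetry of the own-player Hessians in Step 1 to kill the diagonal of $A^*$. Everything else is routine.
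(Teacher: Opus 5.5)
Your proof is correct, but it reaches the constant $h-1$ by a different mechanism from the one the paper relies on. The paper imports this lemma without proof. However, the block argument it writes out for the analogous bound in Lemma~\ref{lem:app-lm-boundedness} works on the skew matrix itself, and proceeds in three steps:
\begin{itemize}
\item each off-diagonal block of $\widehat A_k-A^*$ is bounded by $\tfrac12\bigl(\|[E_i]_j\|_2+\|[E_j]_i\|_2\bigr)\le\delta$;
\item Cauchy--Schwarz over the $h-1$ off-diagonal blocks of a block row contributes one factor $\sqrt{h-1}$;
\item summing the squared row norms contributes another, since $\sum_i\sum_{j\neq i}\|v_j\|_2^2=(h-1)\|v\|_2^2$.
\end{itemize}
Your cyclic-shift splitting into $h-1$ block-permutation pieces replaces this Cauchy--Schwarz bookkeeping with a cleaner structural fact: a matrix supported on a block permutation has norm equal to its largest block. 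But it uses exactly the same information, namely a uniform $\delta$ bound on each off-diagonal block. So it would apply verbatim to $\widehat A_k-A^*$, and it does not actually need your Step~2.

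Where Step~2 does pay off is in the row-wise route you dismissed, and your remark there is mistaken. Let $\tilde v^{(i)}$ denote $v$ with its $i$-th block set to zero. Then $(Bv)_i=E_i\tilde v^{(i)}$, so $\|(Bv)_i\|_2\le\delta\|\tilde v^{(i)}\|_2$ with no Cauchy--Schwarz loss. Summing, $\|Bv\|_2^2\le\delta^2\sum_i\|\tilde v^{(i)}\|_2^2=(h-1)\delta^2\|v\|_2^2$, and hence $\|\widehat A_k-A^*\|_2\le\|B\|_2\le\sqrt{h-1}\,\delta$. This equals the stated bound for $h=2$ and is strictly sharper for $h\ge3$. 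So stacking block rows does not lose the constant; it improves it. The reason is that it keeps the information $\|E_i\|_2\le\delta$ on whole block rows, which both blockwise arguments (the paper's and your permutation splitting) throw away.
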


The same block argument applies verbatim to the limited-memory matrices
\(M_{i,k}^{(\ell)}\), because the proof of
Lemma~\ref{lem:app-multilrsga-block} uses only the block structure of the
matrices entering the skew correction. Thus, if
\[
\|M_{i,k}^{(\ell)}-D(\partial_{\theta_i}f_i)(\theta^{\NE})\|_2\le\delta,
\qquad i=1,\dots,h,
\]
then
\[
\|\overline A_k^{(\ell)}-A^*\|_2\le(h-1)\delta .
\]

Combining Proposition~\ref{prop:app-frozen-stability} with Lemma~\ref{lem:app-multilrsga-block} gives the \(h\)-player version of
Corollary 3.3 in \citep{foglia2026lmlrsga}.


\begin{corollary}[Global \(\tau\)-bound under playerwise secant accuracy]
\label{cor:app-playerwise-tau}
Assume that, for some \(\delta_{\mathrm{pl}}>0\),
\[
\|M_{i,k}^{(\ell)}-D(\partial_{\theta_i}f_i)(\theta^{\NE})\|_2
\le\delta_{\mathrm{pl}},
\qquad i=1,\dots,h .
\]
If
\[
0<\tau<
\frac{2\lambda_{\min}(S^*)}
{
\|S^*A^*-A^*S^*\|_2
+
2(\|S^*\|_2+\|A^*\|_2)(h-1)\delta_{\mathrm{pl}}
},
\]
then there exists \(\bar\eta_k>0\) such that, for every
\(0<\eta<\bar\eta_k\),
\[
\rho\!\left(DT_k^{(\ell)}(\theta^{\NE})\right)<1,
\qquad
T_k^{(\ell)}(\theta)
:=
\theta-\eta(I-\tau\overline A_k^{(\ell)})F(\theta).
\]
\end{corollary}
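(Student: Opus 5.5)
The plan is to reduce Corollary~\ref{cor:app-playerwise-tau} to Proposition~\ref{prop:app-frozen-stability}, taking \(\mathcal A_k:=\overline A_k^{(\ell)}\). This choice is admissible because \(\overline A_k^{(\ell)}\) is block-skew-symmetric by construction. Since \([\overline A_k^{(\ell)}]_{ij}=-([\overline A_k^{(\ell)}]_{ji})^\top\) and the diagonal blocks vanish, we have \(\overline A_k^{(\ell)}=-(\overline A_k^{(\ell)})^\top\) as a full \(d\times d\) matrix. The frozen map \(T_k^{(\ell)}\) is then exactly the map \(T_k\) of the Proposition, with Jacobian \(I-\eta(I-\tau\overline A_k^{(\ell)})H^*\) at \(\theta^{\NE}\), using \(F(\theta^{\NE})=0\). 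It therefore suffices to show that the \(\tau\)-range in the Corollary is contained in the \(\tau\)-range of Proposition~\ref{prop:app-frozen-stability}, with \(\Delta_k=\overline A_k^{(\ell)}-A^*\).

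First, bound \(\|\Delta_k\|_2\). By the limited-memory version of Lemma~\ref{lem:app-multilrsga-block} stated just before the Corollary, the playerwise hypothesis \(\|M_{i,k}^{(\ell)}-D(\partial_{\theta_i}f_i)(\theta^{\NE})\|_2\le\delta_{\mathrm{pl}}\) gives \(\|\Delta_k\|_2\le(h-1)\delta_{\mathrm{pl}}\).

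Second, estimate the two \(\Delta_k\)-dependent terms in the denominator of the Proposition by the triangle inequality and submultiplicativity:
\[
\|S^*\Delta_k-\Delta_kS^*\|_2\le 2\|S^*\|_2\|\Delta_k\|_2,
\qquad
\|\Delta_kA^*+A^*\Delta_k\|_2\le 2\|A^*\|_2\|\Delta_k\|_2 .
\]
Summing these and inserting the first step yields
\[
\|S^*A^*-A^*S^*\|_2+\|S^*\Delta_k-\Delta_kS^*\|_2+\|\Delta_kA^*+A^*\Delta_k\|_2
\le \|S^*A^*-A^*S^*\|_2+2(\|S^*\|_2+\|A^*\|_2)(h-1)\delta_{\mathrm{pl}} .
\]
Because the numerator \(2\lambda_{\min}(S^*)>0\) is unchanged, a larger denominator gives a smaller threshold. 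Hence any \(\tau\) below the Corollary's bound is also below the Proposition's bound. Proposition~\ref{prop:app-frozen-stability} then supplies \(\bar\eta_k>0\) with \(\rho(DT_k^{(\ell)}(\theta^{\NE}))<1\) for \(0<\eta<\bar\eta_k\).

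The elementary norm estimates are routine. The main obstacle is the transfer of Lemma~\ref{lem:app-multilrsga-block} to the limited-memory matrices. The paper asserts this transfer, but I would verify it by writing \(\Delta_k\) blockwise, using the fact that \(A^*\) has off-diagonal blocks \(\tfrac12([H^*]_{ij}-([H^*]_{ji})^\top)\) where \([H^*]_{ij}=[D(\partial_{\theta_i}f_i)(\theta^{\NE})]_j\). One subtlety needs checking: the diagonal blocks of \(A^*\) are the skew parts of the own-player Hessians, which vanish by symmetry of \(\nabla^2_{\theta_i\theta_i}f_i\). This makes \([\Delta_k]_{ii}=0\), so \(\Delta_k\) is assembled only from the errors \(E_i:=M_{i,k}^{(\ell)}-D(\partial_{\theta_i}f_i)(\theta^{\NE})\) restricted to off-diagonal blocks. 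The factor \((h-1)\) comes from bounding each block row of \(\Delta_k\) by combining at most \(h-1\) block errors of norm \(\le\delta_{\mathrm{pl}}\), following the argument of \citep{foglia2026multilrsga}. That argument uses only the blockwise algebra, never the Broyden origin of the matrices, so it applies to \(M_{i,k}^{(\ell)}\) unchanged.
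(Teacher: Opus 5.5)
Your proposal is correct and follows the paper's proof essentially step for step. You set \(\mathcal A_k=\overline A_k^{(\ell)}\), bound \(\|\Delta_k^{(\ell)}\|_2\le(h-1)\delta_{\mathrm{pl}}\) via the limited-memory block estimate, control the commutator and anticommutator terms by \(2\|S^*\|_2\|\Delta_k^{(\ell)}\|_2\) and \(2\|A^*\|_2\|\Delta_k^{(\ell)}\|_2\), and conclude from Proposition~\ref{prop:app-frozen-stability} because the Corollary's threshold is the smaller one. Your extra checks, namely the skew-symmetry of \(\overline A_k^{(\ell)}\) and the blockwise justification of the limited-memory transfer using \([A^*]_{ii}=0\), fill in details the paper only asserts.
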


\begin{proof}
Let \(\Delta_k^{(\ell)}:=\overline A_k^{(\ell)}-A^*\). By the
limited-memory version of Lemma~\ref{lem:app-multilrsga-block},
\[
\|\Delta_k^{(\ell)}\|_2\le(h-1)\delta_{\mathrm{pl}}.
\]
Moreover,
\[
\|S^*\Delta_k^{(\ell)}-\Delta_k^{(\ell)}S^*\|_2
\le
\|S^*\Delta_k^{(\ell)}\|_2
+
\|\Delta_k^{(\ell)}S^*\|_2
\le
2\|S^*\|_2\|\Delta_k^{(\ell)}\|_2,
\]
and similarly
\[
\|\Delta_k^{(\ell)}A^*+A^*\Delta_k^{(\ell)}\|_2
\le
\|\Delta_k^{(\ell)}A^*\|_2
+
\|A^*\Delta_k^{(\ell)}\|_2
\le
2\|A^*\|_2\|\Delta_k^{(\ell)}\|_2.
\]
Therefore,
\[
\begin{aligned}
&\|S^*\Delta_k^{(\ell)}-\Delta_k^{(\ell)}S^*\|_2
+
\|\Delta_k^{(\ell)}A^*+A^*\Delta_k^{(\ell)}\|_2 \\
&\qquad\le
2(\|S^*\|_2+\|A^*\|_2)(h-1)\delta_{\mathrm{pl}} .
\end{aligned}
\]
Hence the stated condition on \(\tau\) implies the condition in
Proposition~\ref{prop:app-frozen-stability} with
\(\mathcal A_k=\overline A_k^{(\ell)}\), and the claim follows.
\end{proof}

We will also use the following immediate variant, which only requires a uniform
bound on \(\|\Delta_k^{(\ell)}\|_2\).

\begin{corollary}[Global \(\tau\)-bound under skew-error boundedness]
\label{cor:app-delta-tau}
Let \(\Delta_k^{(\ell)}:=\overline A_k^{(\ell)}-A^*\) and assume that
\(\|\Delta_k^{(\ell)}\|_2\le\delta\) for all \(k\). If
\[
0<\tau<
\frac{2\lambda_{\min}(S^*)}
{
\|S^*A^*-A^*S^*\|_2
+
2(\|S^*\|_2+\|A^*\|_2)\delta
},
\]
then there exists \(\bar\eta_k>0\) such that, for every
\(0<\eta<\bar\eta_k\),
\[
\rho\!\left(DT_k^{(\ell)}(\theta^{\NE})\right)<1 .
\]
\end{corollary}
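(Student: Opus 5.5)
The plan is to repeat the proof of Corollary~\ref{cor:app-playerwise-tau} with one change: the bound on \(\|\Delta_k^{(\ell)}\|_2\) now comes directly from the hypothesis \(\|\Delta_k^{(\ell)}\|_2\le\delta\), not from Lemma~\ref{lem:app-multilrsga-block}. The claim will then follow from Proposition~\ref{prop:app-frozen-stability} applied with \(\mathcal A_k=\overline A_k^{(\ell)}\).

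The first step is to check that Proposition~\ref{prop:app-frozen-stability} is applicable, i.e.\ that \(\mathcal A_k=\overline A_k^{(\ell)}\) is skew-symmetric. Its diagonal blocks vanish. For \(i\neq j\) we have
\[
[\overline A_k^{(\ell)}]_{ji}=\tfrac12\bigl([M_{j,k}^{(\ell)}]_i-([M_{i,k}^{(\ell)}]_j)^\top\bigr)=-\bigl([\overline A_k^{(\ell)}]_{ij}\bigr)^\top,
\]
so \(\overline A_k^{(\ell)}=-(\overline A_k^{(\ell)})^\top\). Since \(A^*\) is also skew, \(\Delta_k^{(\ell)}=\overline A_k^{(\ell)}-A^*\) is skew, although only its norm will matter. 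The hypotheses \(S^*\succ0\) and \(A^*=-A^{*\top}\) are the standing assumptions of this appendix.

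The second step is the norm estimate. By the triangle inequality and submultiplicativity,
\[
\|S^*\Delta_k^{(\ell)}-\Delta_k^{(\ell)}S^*\|_2\le 2\|S^*\|_2\delta,
\qquad
\|\Delta_k^{(\ell)}A^*+A^*\Delta_k^{(\ell)}\|_2\le 2\|A^*\|_2\delta .
\]
The denominator in Proposition~\ref{prop:app-frozen-stability} is therefore at most \(\|S^*A^*-A^*S^*\|_2+2(\|S^*\|_2+\|A^*\|_2)\delta\). Both denominators are positive, and the threshold \(2\lambda_{\min}(S^*)/(\cdot)\) is decreasing in its denominator. Hence the \(\tau\)-condition assumed here implies the \(\tau\)-condition of Proposition~\ref{prop:app-frozen-stability} for every \(k\).

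Finally, Proposition~\ref{prop:app-frozen-stability} gives \(\bar\eta_k>0\) such that \(\rho(DT_k^{(\ell)}(\theta^{\NE}))<1\) for all \(0<\eta<\bar\eta_k\), where \(DT_k^{(\ell)}(\theta^{\NE})=I-\eta(I-\tau\overline A_k^{(\ell)})H^*\). There is no real obstacle: the only points needing care are the skew-symmetry check and the monotonicity of the threshold in its denominator. The resulting \(\bar\eta_k\) may depend on \(k\), as the statement allows, and no uniformity in \(k\) is claimed.
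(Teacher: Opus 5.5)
Your proposal is correct and follows the paper's proof, which reruns the argument of Corollary~\ref{cor:app-playerwise-tau} with the hypothesis \(\|\Delta_k^{(\ell)}\|_2\le\delta\) in place of the block estimate. That argument bounds the two commutator-type terms by \(2\|S^*\|_2\delta\) and \(2\|A^*\|_2\delta\) and then invokes Proposition~\ref{prop:app-frozen-stability} with \(\mathcal A_k=\overline A_k^{(\ell)}\). Your explicit skew-symmetry check is a harmless addition, since the paper takes it as holding by construction. One small imprecision: the Proposition's denominator need not be strictly positive (e.g.\ if \(\Delta_k^{(\ell)}=0\) and \(S^*A^*=A^*S^*\)). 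In that case its \(\tau\)-condition is vacuous, so your comparison step still holds.
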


\begin{proof}
The proof is the same as the proof of
Corollary~\ref{cor:app-playerwise-tau}, using directly
\(\|\Delta_k^{(\ell)}\|_2\le\delta\).
\end{proof}

\paragraph{Uniform boundedness of the LM--MultiLRSGA correction.}
We now adapt Lemma 5.1 and Corollary 5.2 of \citep{foglia2026lmlrsga} to the
\(h\)-player setting. The next result combines both statements.

\begin{lemma}[Uniform boundedness of the limited-memory matrices and skew correction]
\label{lem:app-lm-boundedness}
Let \(\Omega\subset\mathbb R^d\) be convex and assume that, for each
\(i=1,\dots,h\), the map
\(\partial_{\theta_i}f_i:\Omega\to\mathbb R^{d_i}\) is Lipschitz continuous with
constant \(L_i>0\). Fix \(\ell\ge1\) and assume that all pairs entering either the active window or the rank-one base
satisfy \(s^t\neq0\). Then, for all \(i=1,\dots,h\),
\[
\|M_{i,k}^{(\ell)}\|_2
=
\|(M_{i,k}^{(\ell)})^\top\|_2
\le
C_i(\ell):=(\ell+1)L_i .
\]
Moreover, setting $C_{\max}(\ell):=\max_{i=1,\dots,h}C_i(\ell)$ and $C_A(\ell):=(h-1)C_{\max}(\ell)$, one has
\[
\|\overline A_k^{(\ell)}\|_2\le C_A(\ell).
\]
Consequently, with \(\Delta_k^{(\ell)}:=\overline A_k^{(\ell)}-A^*\), it holds $\|\Delta_k^{(\ell)}\|_2 \le \delta_A(\ell):=C_A(\ell)+\|A^*\|_2$

\end{lemma}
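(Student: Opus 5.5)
The plan is to bound each term in the closed-form expression for \(M_{i,k}^{(\ell)}\) and then treat the skew correction \(\overline A_k^{(\ell)}\) with a block-decomposition argument that produces exactly the factor \((h-1)\). Throughout, the iterates \(\theta^t\) entering the window and the rank-one base are taken to lie in \(\Omega\), which I read as implicit in the statement. Under that assumption, convexity of \(\Omega\) and Lipschitz continuity of \(\partial_{\theta_i}f_i\) give \(\|y_i^t\|_2\le L_i\|s^t\|_2\) for every pair used.

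\textbf{Step 1: the matrices \(M_{i,k}^{(\ell)}\).} Use the explicit formula stated just before the lemma:
\[
M_{i,k}^{(\ell)}=H_{0,i}^k(V_{k-\ell}\cdots V_{k-1})+\sum_{t=k-\ell}^{k-1}p_t\,y_i^t(s^t)^\top(V_{t+1}\cdots V_{k-1}).
\]
\begin{itemize}
\item Each \(V_t=I-p_ts^t(s^t)^\top\) is the orthogonal projector onto \((s^t)^\perp\), because \(s^t\neq0\) is assumed. Hence \(\|V_t\|_2\le1\), and every product of \(V_t\)'s also has norm at most \(1\).
\item The rank-one matrix \(p_t\,y_i^t(s^t)^\top\) has norm \(\|y_i^t\|_2\|s^t\|_2/\|s^t\|_2^2\le L_i\).
\item For the same reason, \(\|H_{0,i}^k\|_2=\|y_i^{k-\ell-1}\|_2/\|s^{k-\ell-1}\|_2\le L_i\).
\end{itemize}
By the triangle inequality there are \(\ell+1\) terms, each of norm at most \(L_i\), so \(\|M_{i,k}^{(\ell)}\|_2\le(\ell+1)L_i\). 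Equality of a matrix's spectral norm with that of its transpose gives the transpose statement.

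\textbf{Step 2: the skew correction.} Here I would avoid a naive rowwise estimate, which produces \(\sqrt h\)-type factors. Instead, decompose the off-diagonal block pattern into \(h-1\) cyclic shifts. For \(r=1,\dots,h-1\), let \(\overline A_{k,r}\) be the block matrix that keeps only the blocks \((i,j)\) with \(j\equiv i+r \pmod h\), namely \(\tfrac12\bigl([M_{i,k}^{(\ell)}]_j-([M_{j,k}^{(\ell)}]_i)^\top\bigr)\), and is zero elsewhere. Then \(\overline A_k^{(\ell)}=\sum_{r=1}^{h-1}\overline A_{k,r}\).

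Each \(\overline A_{k,r}\) has exactly one nonzero block in every block row and every block column. Its spectral norm therefore equals the maximum norm of its blocks: write \(\overline A_{k,r}v\), note it permutes the blocks \(v_j\) to distinct outputs, and sum squares.

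Each block is bounded by \(\tfrac12(C_i(\ell)+C_j(\ell))\le C_{\max}(\ell)\). This uses that a submatrix \([M]_j\) has norm at most \(\|M\|_2\), together with Step 1 for both \(M\) and its transpose. Summing over \(r\) gives \(\|\overline A_k^{(\ell)}\|_2\le(h-1)C_{\max}(\ell)=C_A(\ell)\).

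\textbf{Step 3: the error bound.} The bound on \(\Delta_k^{(\ell)}\) is the triangle inequality \(\|\overline A_k^{(\ell)}-A^*\|_2\le C_A(\ell)+\|A^*\|_2\).

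\textbf{Main obstacle.} The only delicate point is the \((h-1)\) factor in Step 2, which is why I use the cyclic-shift decomposition. A secondary issue is the implicit requirement that all iterates used in the secant pairs stay in \(\Omega\). I would state this explicitly as part of the hypotheses, for instance by taking \(\Omega\) to be a convex neighbourhood containing the trajectory.
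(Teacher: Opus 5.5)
Your proposal is correct, and your Steps~1 and~3 are the paper's argument. The difference is how you get the factor $(h-1)$ for $\overline A_k^{(\ell)}$, and there you take a different route.

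\textbf{The paper's route.} The paper uses exactly the rowwise estimate you set out to avoid. It bounds $\|(\overline A_k^{(\ell)}v)_i\|_2\le C_{\max}(\ell)\sum_{j\neq i}\|v_j\|_2\le\sqrt{h-1}\,C_{\max}(\ell)\bigl(\sum_{j\neq i}\|v_j\|_2^2\bigr)^{1/2}$. It then squares, sums over $i$, and uses that each $\|v_j\|_2^2$ appears in exactly $h-1$ of the inner sums. This gives $\|\overline A_k^{(\ell)}v\|_2^2\le(h-1)^2C_{\max}(\ell)^2\|v\|_2^2$. So a rowwise argument does not lose a $\sqrt h$ factor once Cauchy--Schwarz is paired with this column count. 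In effect it is a Schur-test bound: the matrix of block norms has row and column sums at most $(h-1)C_{\max}(\ell)$. Only cruder variants lose something, for example replacing $\sum_{j\neq i}\|v_j\|_2^2$ by $\|v\|_2^2$, which gives $\sqrt{h(h-1)}\,C_{\max}(\ell)$.

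\textbf{Your route.} Splitting the off-diagonal pattern into the $h-1$ fixed-point-free cyclic shifts $j\equiv i+r\pmod h$ is equally rigorous. Each piece is a block-permutation matrix, so its spectral norm equals its largest block norm. The triangle inequality over $r$ then gives the same constant $C_A(\ell)$.

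\textbf{What each buys.} Your decomposition makes the origin of $(h-1)$ transparent: it counts the off-diagonal block ``diagonals''. It also yields directly the refinement $\|\overline A_k^{(\ell)}\|_2\le\sum_{r=1}^{h-1}\max_i\|[\overline A_k^{(\ell)}]_{i,i+r}\|_2$ (indices mod $h$), which is sharper when block norms are uneven. The paper's computation needs no combinatorial decomposition at all.

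\textbf{On $\Omega$.} You are right that the iterates forming the secant pairs must lie in $\Omega$; the paper's proof uses this tacitly. Convexity of $\Omega$ plays no role in $\|y_i^t\|_2\le L_i\|s^t\|_2$, though. That bound only needs both endpoints $\theta^t,\theta^{t+1}$ to lie in $\Omega$.
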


\begin{proof}
For each stored pair, $y_i^t$ and $s^t$, since \(\partial_{\theta_i}f_i\) is \(L_i\)-Lipschitz on \(\Omega\), it holds $\|y_i^t\|_2\le L_i\|s^t\|_2$.

By definition \(p_t=((s^t)^\top s^t)^{-1}=\|s^t\|_2^{-2}\), and then each rank-one term satisfies
\(
\|p_t\,y_i^t(s^t)^\top\|_2
=
p_t\|y_i^t\|_2\|s^t\|_2
\le
L_i .
\)
Furthermore, \(V_t=I-p_t s^t(s^t)^\top\) it is easy verify that \(\|V_t\|_2\le1\). The initialization
\(H_{0,i}^k\) is of the same rank-one form, so
\(
\|H_{0,i}^k\|_2
=
\frac{\|y_i^{k-\ell-1}\|_2\|s^{k-\ell-1}\|_2}
{\|s^{k-\ell-1}\|_2^2}
\le L_i .
\)
Therefore, by the limited-memory expansion and the triangle inequality,
\[
\begin{aligned}
\|M_{i,k}^{(\ell)}\|_2
&\le
\|H_{0,i}^k\|_2
\|V_{k-\ell}\cdots V_{k-1}\|_2 +
\sum_{t=k-\ell}^{k-1}
\|p_t\,y_i^t(s^t)^\top\|_2
\|V_{t+1}\cdots V_{k-1}\|_2  \\
&\le
L_i+\sum_{t=k-\ell}^{k-1}L_i
=
(\ell+1)L_i .
\end{aligned}
\]
The transpose bound follows from
\(\|(M_{i,k}^{(\ell)})^\top\|_2=\|M_{i,k}^{(\ell)}\|_2\).
It remains to bound \(\overline A_k^{(\ell)}\). 
Since each block has norm bounded by the norm of the full rectangular matrix,
\(
\|[M_{i,k}^{(\ell)}]_j\|_2\le\|M_{i,k}^{(\ell)}\|_2\le C_i(\ell),
\)
and similarly
\(
\|([M_{j,k}^{(\ell)}]_i)^\top\|_2
=
\|[M_{j,k}^{(\ell)}]_i\|_2
\le
C_j(\ell),
\)
and then, for \(i\neq j\),
\[
\|[\overline A_k^{(\ell)}]_{ij}\|_2
= \frac12 \Big|\Big| 
\Bigl([M_{i,k}^{(\ell)}]_j-([M_{j,k}^{(\ell)}]_i)^\top\Bigr)\Big|\Big|_2
\le
\frac12(C_i(\ell)+C_j(\ell))
\le
C_{\max}(\ell).
\]
Let \(v=(v_1,\dots,v_h)\in\mathbb R^d\), with
\(v_i\in\mathbb R^{d_i}\). Since the diagonal blocks of
\(\overline A_k^{(\ell)}\) vanish,
\(
(\overline A_k^{(\ell)}v)_i
=
\sum_{j\neq i}[\overline A_k^{(\ell)}]_{ij}v_j .
\)
Thus we obtain the following inequality
\[
\begin{aligned}
\|(\overline A_k^{(\ell)}v)_i\|_2
&\le
\sum_{j\neq i}
\|[\overline A_k^{(\ell)}]_{ij}\|_2\|v_j\|_2 
\le
C_{\max}(\ell)
\sum_{j\neq i}\|v_j\|_2 \le
\sqrt{h-1}\,C_{\max}(\ell)
\Bigl(\sum_{j\neq i}\|v_j\|_2^2\Bigr)^{1/2}.
\end{aligned}
\]
Squaring and summing over \(i\) gives
\[
\begin{aligned}
\|\overline A_k^{(\ell)}v\|_2^2
&=
\sum_{i=1}^h
\|(\overline A_k^{(\ell)}v)_i\|_2^2 \le
(h-1)C_{\max}(\ell)^2
\sum_{i=1}^h\sum_{j\neq i}\|v_j\|_2^2 \\
&=
(h-1)^2C_{\max}(\ell)^2
\sum_{j=1}^h\|v_j\|_2^2 =
(h-1)^2C_{\max}(\ell)^2\|v\|_2^2 .
\end{aligned}
\]
Therefore
\(
\|\overline A_k^{(\ell)}\|_2
\le
(h-1)C_{\max}(\ell)
=
C_A(\ell).
\)
And finally,
\[
\|\Delta_k^{(\ell)}\|_2
=
\|\overline A_k^{(\ell)}-A^*\|_2
\le
\|\overline A_k^{(\ell)}\|_2+\|A^*\|_2
\le
C_A(\ell)+\|A^*\|_2
=
\delta_A(\ell).
\]
\end{proof}

\paragraph{Local convergence.}
The following theorem is the \(h\)-player limited-memory analogue of
Theorem~5.3 in \citep{foglia2026lmlrsga}. We state it for completeness and only
prove the estimates that differ from the two-objective case.

\begin{theorem}[Local linear convergence of LM--MultiLRSGA]
\label{thm:app-lm-multilrsga-convergence}
Let \(\Omega\subset\mathbb R^d\) be a convex open set and assume
\(f_i\in C^3(\Omega,\mathbb R)\) for \(i=1,\dots,h\). Assume that, for each
\(i\), the map \(\partial_{\theta_i}f_i:\Omega\to\mathbb R^{d_i}\) is
Lipschitz continuous on \(\Omega\). Let \(\theta^{\NE}\in\Omega\) be a stable
Nash equilibrium such that
\[
F(\theta^{\NE})=0,
\qquad
H^*=DF(\theta^{\NE})=S^*+A^*,
\qquad
S^*=S^{*\top}\succ0,
\qquad
A^*=-A^{*\top}.
\]
Then, for every fixed history length \(\ell\ge1\), there exist constants $\bar\tau(\ell)>0$, $\bar\eta(\ell)>0$ and $r(\ell)>0$,
such that, if \(0<\tau<\bar\tau(\ell)\), \(0<\eta<\bar\eta(\ell)\), and
\(\theta^0\in B_{r(\ell)}(\theta^{\NE})\), the LM--MultiLRSGA iterates
\[
\theta^{k+1}
=
\theta^k-\eta\bigl(I-\tau\overline A_k^{(\ell)}\bigr)F(\theta^k)
\]
remain in \(B_{r(\ell)}(\theta^{\NE})\) and converge linearly to
\(\theta^{\NE}\). More precisely, there exists \(q(\ell)\in(0,1)\) such that
\[
\|\theta^k-\theta^{\NE}\|_2
\le
q(\ell)^k\|\theta^0-\theta^{\NE}\|_2,
\qquad k\ge0.
\]
\end{theorem}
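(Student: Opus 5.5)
We will follow the structure of Theorem~5.3 in \citep{foglia2026lmlrsga}, replacing the two-player ingredients by the $h$-player estimates already established above. The argument is a perturbation argument around a \emph{uniform} family of frozen linearizations. The main difficulty is that $\overline A_k^{(\ell)}$ changes with $k$, so stability of each frozen map $T_k^{(\ell)}$ is not enough. We need a contraction in a single norm that is uniform in $k$.

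\textbf{Step 1: uniform bound on the skew correction.} Lemma~\ref{lem:app-lm-boundedness} gives, for all $k$, the bound $\|\Delta_k^{(\ell)}\|_2\le\delta_A(\ell)$. We apply Corollary~\ref{cor:app-delta-tau} with $\delta=\delta_A(\ell)$ and define
\[
\bar\tau(\ell):=\frac{2\lambda_{\min}(S^*)}{\|S^*A^*-A^*S^*\|_2+2(\|S^*\|_2+\|A^*\|_2)\delta_A(\ell)} .
\]
This bound is independent of $k$.

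\textbf{Step 2: uniform contraction of the frozen Jacobians.} Rather than working with $\rho(DT_k^{(\ell)})$, we will use the quantitative content of Lemma~\ref{lem:app-sym-identity}. For $\tau<\bar\tau(\ell)$, the estimate in the proof of Proposition~\ref{prop:app-frozen-stability} yields
\[
\operatorname{sym}(G_k)\succeq \mu I,\qquad \mu:=\lambda_{\min}(S^*)-\tfrac{\tau}{2}\bigl(\|S^*A^*-A^*S^*\|_2+2(\|S^*\|_2+\|A^*\|_2)\delta_A(\ell)\bigr)>0,
\]
uniformly in $k$. We also have $\|G_k\|_2\le (1+\tau C_A(\ell))\|H^*\|_2=:\Gamma$. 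Expanding
\[
\|(I-\eta G_k)v\|_2^2=\|v\|_2^2-2\eta\, v^\top \operatorname{sym}(G_k)v+\eta^2\|G_kv\|_2^2
\]
gives $\|I-\eta G_k\|_2^2\le 1-2\eta\mu+\eta^2\Gamma^2$. For $\eta<\bar\eta(\ell):=\mu/\Gamma^2$ this is at most $1-\eta\mu<1$. This gives a common Euclidean contraction factor $q_0:=\sqrt{1-\eta\mu}$ for all $k$, and it removes the $k$-dependence of $\bar\eta_k$ that appears in the corollaries.

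\textbf{Step 3: nonlinear remainder and induction.} Since $F(\theta^{\NE})=0$ and $F\in C^2$ (because $f_i\in C^3$), Taylor's formula gives
\[
F(\theta)=H^*(\theta-\theta^{\NE})+R(\theta),\qquad \|R(\theta)\|_2\le \tfrac{L_H}{2}\|\theta-\theta^{\NE}\|_2^2
\]
on a ball $B_{r_0}(\theta^{\NE})\subset\Omega$, where $L_H$ bounds the second derivative of $F$ there. Writing $e_k:=\theta^k-\theta^{\NE}$, the iteration reads
\[
e_{k+1}=(I-\eta G_k)e_k-\eta(I-\tau\overline A_k^{(\ell)})R(\theta^k),
\]
and therefore
\[
\|e_{k+1}\|_2\le\bigl(q_0+\eta(1+\tau C_A(\ell))\tfrac{L_H}{2}\|e_k\|_2\bigr)\|e_k\|_2 .
\]
We choose $r(\ell)\le r_0$ small enough that $q(\ell):=q_0+\eta(1+\tau C_A(\ell))L_Hr(\ell)/2<1$. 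Induction then shows that $\theta^k\in B_{r(\ell)}$ and $\|e_k\|_2\le q(\ell)^k\|e_0\|_2$.

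One technical point must hold along the way: the secant pairs used at every step need $s^t\neq0$, which Lemma~\ref{lem:app-lm-boundedness} requires. If some $s^t=0$, then $F(\theta^t)=0$ because $I-\tau\overline A$ is invertible (skew perturbation of the identity), so $\theta^t=\theta^{\NE}$ locally and the claim is trivial from that point on.

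The same induction also keeps all pairs inside the convex set $\Omega$, so the Lipschitz bounds on $\partial_{\theta_i}f_i$, and hence $C_A(\ell)$, remain valid for every $k$.

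We expect the main obstacle to be Step~2. It requires turning the spectral-radius statement of Proposition~\ref{prop:app-frozen-stability} into a $k$-uniform norm contraction. The plan is to extract the uniform coercivity $\operatorname{sym}(G_k)\succeq\mu I$ directly from the symmetric-part identity of Lemma~\ref{lem:app-sym-identity}, rather than relying on $\rho<1$ for each $k$ separately.
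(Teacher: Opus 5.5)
Your proposal is correct and follows the same route as the paper's proof: the uniform bound from Lemma~\ref{lem:app-lm-boundedness}, the $\tau$-threshold from Corollary~\ref{cor:app-delta-tau}, $k$-uniform coercivity of $G_k^{(\ell)}$, the bound $\|G_k^{(\ell)}\|_2\le(1+\tau C_A(\ell))\|H^*\|_2$ giving a common $\bar\eta(\ell)$, and then the Taylor-remainder induction; your Step~2 just makes explicit the estimate $\operatorname{sym}(G_k)\succeq\mu I$ that the paper imports from the two-player analysis. One tidy-up: your $\bar\eta(\ell)=\mu/\Gamma^2$ and $r(\ell)$ depend on $\tau$ and degenerate as $\tau\to\bar\tau(\ell)$, so to get constants uniform over the stated ranges, replace your $\bar\tau(\ell)$ by half of it (which gives $\mu\ge\lambda_{\min}(S^*)/2$), and use $1-\sqrt{1-\eta\mu}\ge\eta\mu/2$ to see that any $r(\ell)\le r_0$ with $r(\ell)<\lambda_{\min}(S^*)/\bigl(2(1+\bar\tau(\ell)C_A(\ell))L_H\bigr)$ works for every admissible $\tau$ and $\eta$.
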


\begin{proof}
The proof follows the contraction argument of Theorem~5.3 in
\citep{foglia2026lmlrsga}. The only estimates that change are those controlling
the limited-memory skew correction. In the present \(h\)-player setting,
Lemma~\ref{lem:app-lm-boundedness} gives
\[
\|\overline A_k^{(\ell)}\|_2\le C_A(\ell),
\qquad
\|\overline A_k^{(\ell)}-A^*\|_2\le\delta_A(\ell),
\]
uniformly in \(k\), with the factor \((h-1)\) encoded in \(C_A(\ell)\). Hence
Corollary~\ref{cor:app-delta-tau} gives a \(\bar\tau(\ell)>0\) such that the
frozen matrices \(G_k^{(\ell)}=(I-\tau\overline A_k^{(\ell)})H^*\) satisfy the
same uniform coercivity estimate used in \citep{foglia2026lmlrsga}. The bound
\[
\|G_k^{(\ell)}\|_2\le(1+\tau C_A(\ell))\|H^*\|_2
\]
then yields a common \(\bar\eta(\ell)>0\) for which the linearized frozen maps
are uniformly contractive. The remaining Taylor-remainder argument is unchanged:
for \(f_i\in C^3(\Omega)\), \(F\) has a quadratic remainder around
\(\theta^{\NE}\), and choosing \(r(\ell)>0\) sufficiently small gives invariance
of \(B_{r(\ell)}(\theta^{\NE})\) and linear convergence.
\end{proof}

The theorem shows that, for every fixed memory length \(\ell\), LM--MultiLRSGA
inherits the local linear convergence mechanism of LM--LRSGA. The passage from
two objectives to \(h\) players affects the constants through the block-skew
estimate, in particular through the \((h-1)\) factor appearing in the control of
\(\overline A_k^{(\ell)}\) and in the MultiLRSGA block estimate.

\clearpage
\newpage

\section{Additional PINN diagnostics}
\label{app:pinn_additional_diagnostics}

This appendix reports additional diagnostics for the hard-constrained Burgers PINN
experiments in Section~\ref{sec:pinn_experiments}. Table \ref{tab:table1-minmax} shows all metrics for the main experiments. Also, we show additional figures that are not reported as main evidence in the paper; they document the behavior of the individual
residual components and of the LM--MultiLRSGA phase.


\begin{table*}[hbt!]
\centering
\caption{
Test losses for the hard-constrained Burgers PINN benchmark. The results of Table \ref{tab:pinn_exp1_test_losses} are extended with min and max computed on the ten seeds.
Lower values are better. The column $\Delta$ contains the percentage difference relative to the best value achieved for $\Lsum$.
}
\label{tab:table1-minmax}
\resizebox{\textwidth}{!}{%
\begin{tabular}{llccccccccccccc}
\toprule
 &  & \multicolumn{3}{c}{\(\Lleft\)} & \multicolumn{3}{c}{\(\Lcenter\)} & \multicolumn{3}{c}{\(\Lright\)} & \multicolumn{3}{c}{\(\Lsum\)} &  \\
\cmidrule(lr){3-5}\cmidrule(lr){6-8}\cmidrule(lr){9-11}\cmidrule(lr){12-14}
LR & Method & Mean \(\pm\) std & Min & Max & Mean \(\pm\) std & Min & Max & Mean \(\pm\) std & Min & Max & Mean \(\pm\) std & Min & Max & $\Delta$ (best) \\
\midrule
\(10^{-2}\) & \methodours & \(0.1859\pm0.0705\) & \(0.1165\) & \(0.3557\) & \(0.8123\pm0.0292\) & \(0.7636\) & \(0.8587\) & \(0.1551\pm0.0633\) & \(0.0909\) & \(0.2931\) & \(1.1533\pm0.0797\) & \(1.0422\) & \(1.3184\) & \(0.0\%\) \\
\(10^{-2}\) & PCGrad & \(0.2455\pm0.0855\) & \(0.1353\) & \(0.4123\) & \(0.8515\pm0.0382\) & \(0.8023\) & \(0.9252\) & \(0.1180\pm0.0158\) & \(0.0947\) & \(0.1419\) & \(1.2151\pm0.0846\) & \(1.1036\) & \(1.3956\) & \(-5.4\%\) \\
\(10^{-2}\) & MultiAdam & \(0.4082\pm0.2926\) & \(0.1457\) & \(1.0622\) & \(0.8028\pm0.1645\) & \(0.5956\) & \(1.1556\) & \(0.5137\pm0.2062\) & \(0.2516\) & \(0.8429\) & \(1.7247\pm0.3459\) & \(1.3142\) & \(2.5864\) & \(-49.5\%\) \\
\(10^{-2}\) & DualCone center & \(0.2012\pm0.0829\) & \(0.1166\) & \(0.3250\) & \(0.8511\pm0.0368\) & \(0.8023\) & \(0.9131\) & \(0.2122\pm0.0772\) & \(0.1257\) & \(0.3417\) & \(1.2646\pm0.0974\) & \(1.1526\) & \(1.4572\) & \(-9.6\%\) \\
\(10^{-2}\) & DualCone avg & \(0.3151\pm0.0564\) & \(0.2373\) & \(0.3892\) & \(0.9517\pm0.0346\) & \(0.8955\) & \(1.0042\) & \(0.3161\pm0.0945\) & \(0.2260\) & \(0.5075\) & \(1.5829\pm0.1589\) & \(1.3588\) & \(1.8520\) & \(-37.2\%\) \\
\(10^{-2}\) & DualCone proj & \(0.3129\pm0.0575\) & \(0.2351\) & \(0.3829\) & \(0.9444\pm0.0413\) & \(0.8589\) & \(1.0014\) & \(0.3147\pm0.0927\) & \(0.2311\) & \(0.5024\) & \(1.5720\pm0.1621\) & \(1.3251\) & \(1.8372\) & \(-36.3\%\) \\
\midrule
\(10^{-3}\) & \methodours & \(0.2509\pm0.1056\) & \(0.1124\) & \(0.4127\) & \(0.8529\pm0.0350\) & \(0.8091\) & \(0.9424\) & \(0.1889\pm0.1144\) & \(0.0857\) & \(0.3833\) & \(1.2927\pm0.0725\) & \(1.1558\) & \(1.3868\) & \(0.0\%\) \\
\(10^{-3}\) & PCGrad & \(0.5248\pm0.0993\) & \(0.3363\) & \(0.6509\) & \(0.9604\pm0.0471\) & \(0.8647\) & \(1.0131\) & \(0.3878\pm0.1307\) & \(0.2348\) & \(0.5713\) & \(1.8730\pm0.1672\) & \(1.6644\) & \(2.1226\) & \(-44.9\%\) \\
\(10^{-3}\) & MultiAdam & \(0.5160\pm0.3818\) & \(0.1900\) & \(1.4165\) & \(0.7056\pm0.1096\) & \(0.5381\) & \(0.8477\) & \(0.7054\pm0.5435\) & \(0.1887\) & \(1.6261\) & \(1.9270\pm0.7098\) & \(1.1520\) & \(2.9862\) & \(-49.1\%\) \\
\(10^{-3}\) & DualCone center & \(0.5651\pm0.1765\) & \(0.3175\) & \(0.9084\) & \(0.9303\pm0.0506\) & \(0.8124\) & \(0.9920\) & \(0.4774\pm0.1559\) & \(0.2901\) & \(0.7597\) & \(1.9727\pm0.1857\) & \(1.7491\) & \(2.2363\) & \(-52.6\%\) \\
\(10^{-3}\) & DualCone avg & \(0.6602\pm0.2543\) & \(0.3334\) & \(1.2644\) & \(0.9649\pm0.0558\) & \(0.8371\) & \(1.0206\) & \(0.5529\pm0.1964\) & \(0.3047\) & \(0.9604\) & \(2.1779\pm0.2774\) & \(1.9130\) & \(2.7912\) & \(-68.5\%\) \\
\(10^{-3}\) & DualCone proj & \(0.6625\pm0.2535\) & \(0.3421\) & \(1.2645\) & \(0.9622\pm0.0585\) & \(0.8277\) & \(1.0203\) & \(0.5558\pm0.1941\) & \(0.3121\) & \(0.9604\) & \(2.1805\pm0.2758\) & \(1.9166\) & \(2.7912\) & \(-68.7\%\) \\
\midrule
\(10^{-4}\) & \methodours & \(0.2694\pm0.1145\) & \(0.1139\) & \(0.4315\) & \(0.8657\pm0.0392\) & \(0.8243\) & \(0.9712\) & \(0.1985\pm0.1247\) & \(0.0863\) & \(0.4019\) & \(1.3336\pm0.0625\) & \(1.2125\) & \(1.4141\) & \(0.0\%\) \\
\(10^{-4}\) & PCGrad & \(0.7573\pm0.3694\) & \(0.3761\) & \(1.7061\) & \(0.9652\pm0.0615\) & \(0.8279\) & \(1.0286\) & \(0.6161\pm0.2273\) & \(0.3242\) & \(1.0968\) & \(2.3385\pm0.3997\) & \(2.0061\) & \(3.3427\) & \(-75.3\%\) \\
\(10^{-4}\) & MultiAdam & \(0.7607\pm0.2909\) & \(0.3428\) & \(1.3473\) & \(0.6634\pm0.0206\) & \(0.6273\) & \(0.6969\) & \(0.6620\pm0.2603\) & \(0.2533\) & \(1.1010\) & \(2.0861\pm0.2612\) & \(1.7262\) & \(2.5397\) & \(-56.4\%\) \\
\(10^{-4}\) & DualCone center & \(0.7700\pm0.3910\) & \(0.3776\) & \(1.7783\) & \(0.9621\pm0.0619\) & \(0.8225\) & \(1.0236\) & \(0.6312\pm0.2307\) & \(0.3304\) & \(1.1166\) & \(2.3633\pm0.4113\) & \(2.0200\) & \(3.4031\) & \(-77.2\%\) \\
\(10^{-4}\) & DualCone avg & \(0.7878\pm0.4165\) & \(0.3806\) & \(1.8735\) & \(0.9678\pm0.0644\) & \(0.8255\) & \(1.0412\) & \(0.6452\pm0.2392\) & \(0.3337\) & \(1.1519\) & \(2.4008\pm0.4513\) & \(2.0300\) & \(3.5593\) & \(-80.0\%\) \\
\(10^{-4}\) & DualCone proj & \(0.7881\pm0.4164\) & \(0.3821\) & \(1.8735\) & \(0.9675\pm0.0647\) & \(0.8246\) & \(1.0412\) & \(0.6456\pm0.2389\) & \(0.3338\) & \(1.1519\) & \(2.4012\pm0.4511\) & \(2.0300\) & \(3.5593\) & \(-80.1\%\) \\
\bottomrule
\end{tabular}
}
\end{table*}

\clearpage
\newpage
\subsection{Local residual curves for Experiment 1}
\label{app:pinn_exp1_local_curves}

Figures~\ref{fig:app_pinn_exp1_local_lr1e2}--\ref{fig:app_pinn_exp1_local_lr1e4}
show the three local residual components \(\Lleft\), \(\Lcenter\), and
\(\Lright\) for the main benchmark. The curves are epoch-averaged mini-batch
losses, consistently with the training-loss convention used in the main text.

\begin{figure}[hbt!]
\centering

\begin{subfigure}{0.65\textwidth}
\centering
\includegraphics[width=\linewidth]{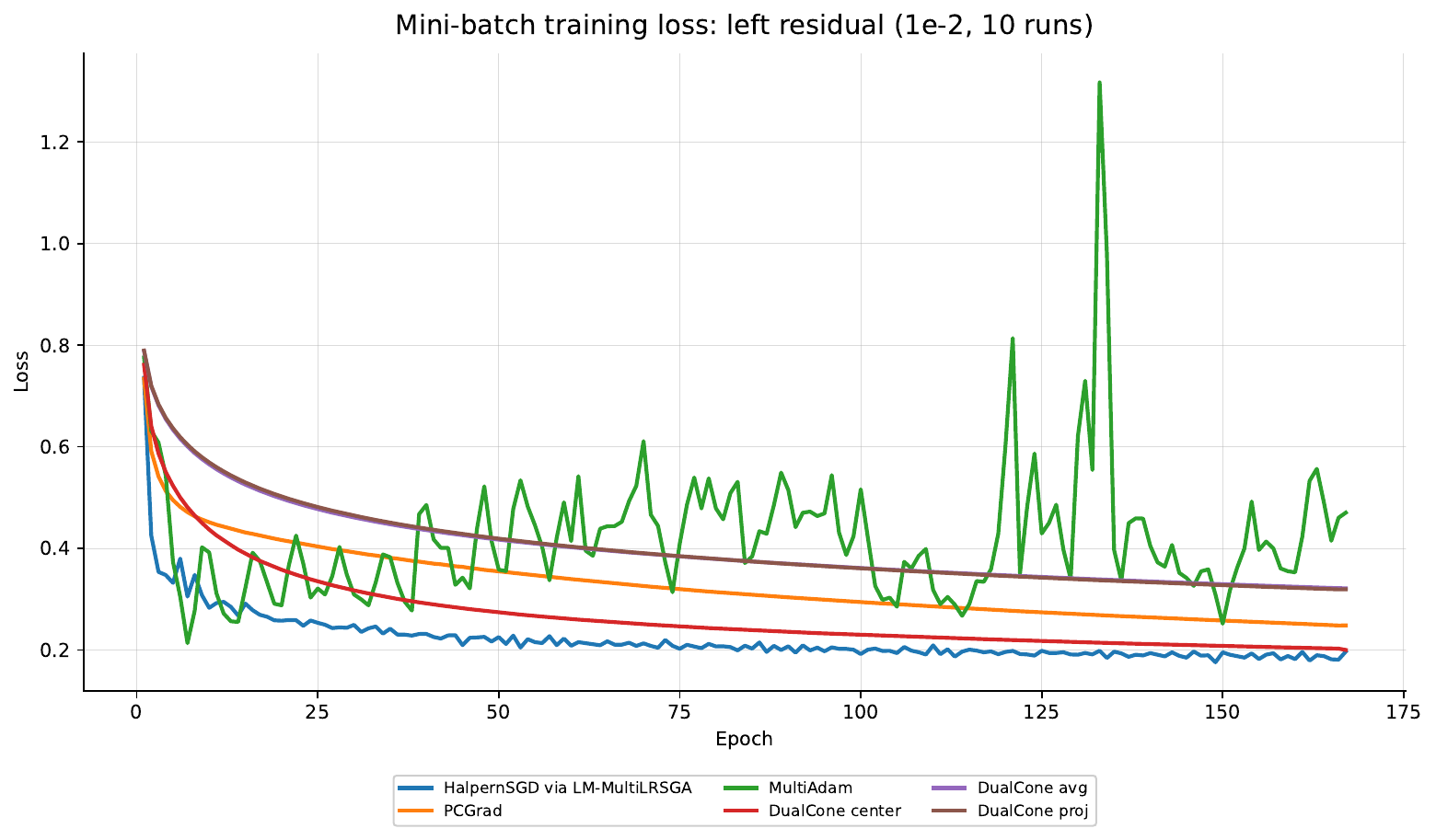}
\caption{\(\Lleft\)}
\end{subfigure}

\medskip

\begin{subfigure}{0.65\textwidth}
\centering
\includegraphics[width=\linewidth]{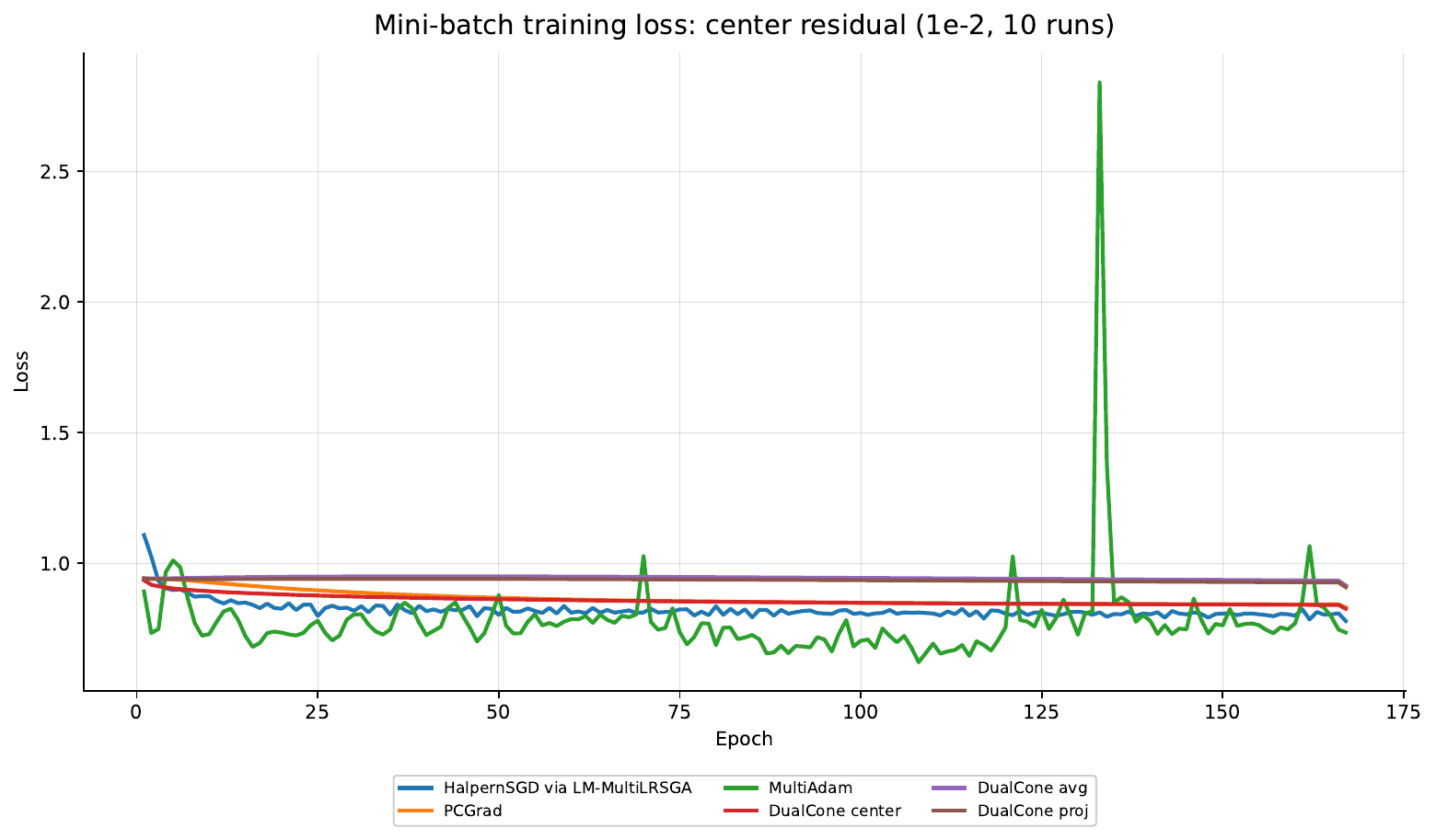}
\caption{\(\Lcenter\)}
\end{subfigure}

\medskip

\begin{subfigure}{0.65\textwidth}
\centering
\includegraphics[width=\linewidth]{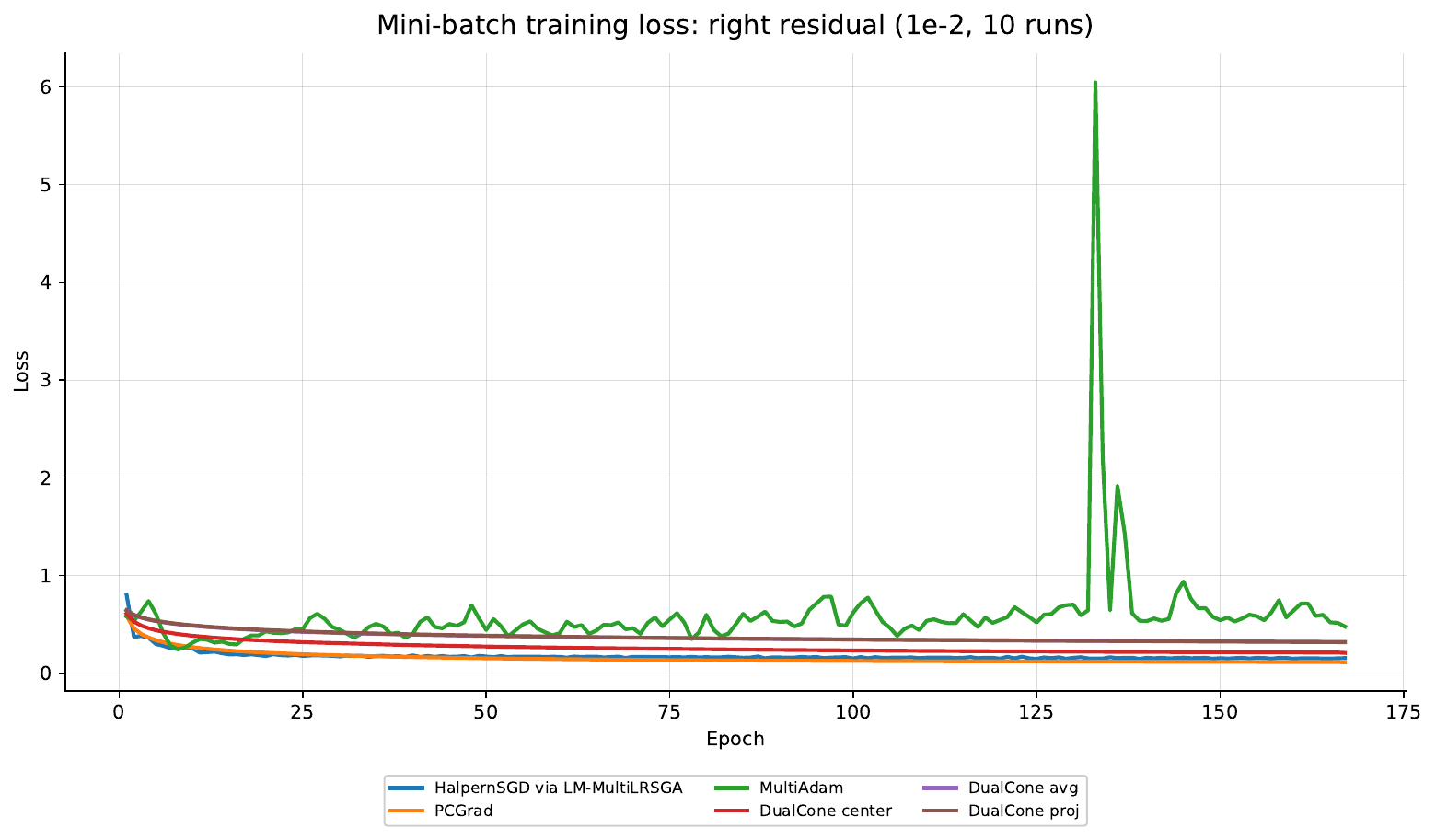}
\caption{\(\Lright\)}
\end{subfigure}

\caption{Experiment~1 local residual curves for \(\eta_0=10^{-2}\).}
\label{fig:app_pinn_exp1_local_lr1e2}
\end{figure}

\begin{figure}[t!]
\centering

\begin{subfigure}{0.65\textwidth}
\centering
\includegraphics[width=\linewidth]{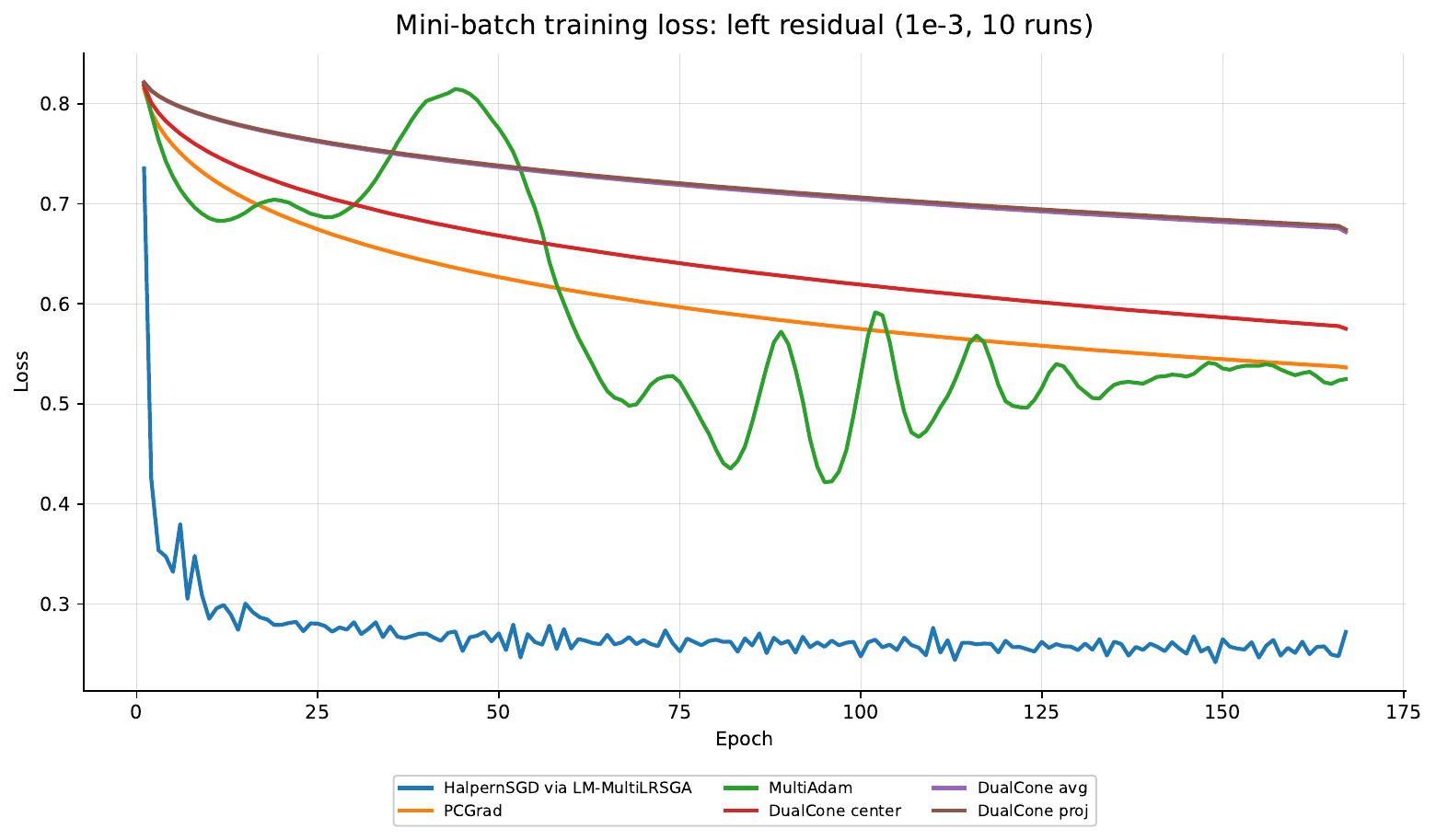}
\caption{\(\Lleft\)}
\end{subfigure}

\medskip

\begin{subfigure}{0.65\textwidth}
\centering
\includegraphics[width=\linewidth]{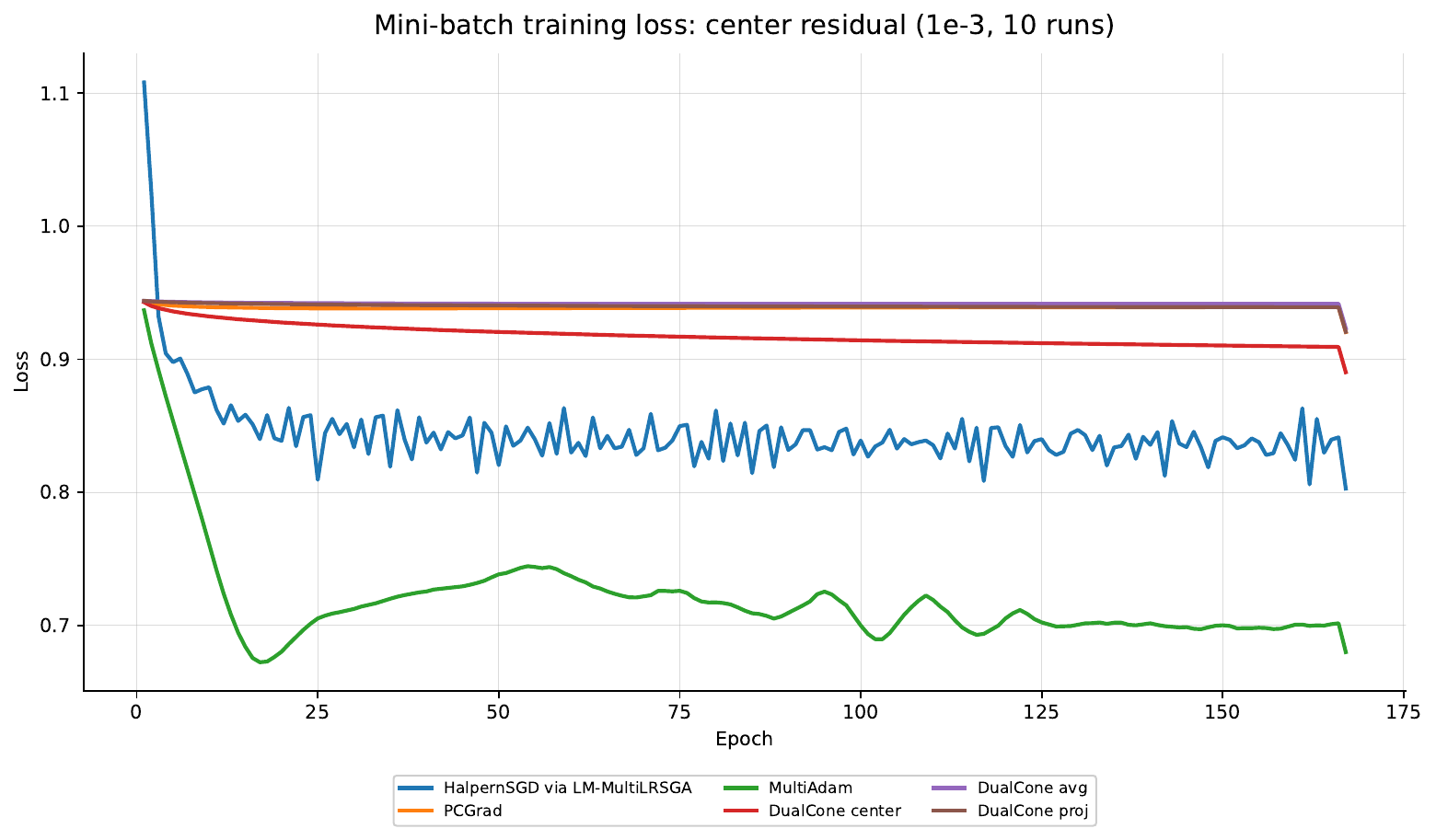}
\caption{\(\Lcenter\)}
\end{subfigure}

\medskip

\begin{subfigure}{0.65\textwidth}
\centering
\includegraphics[width=\linewidth]{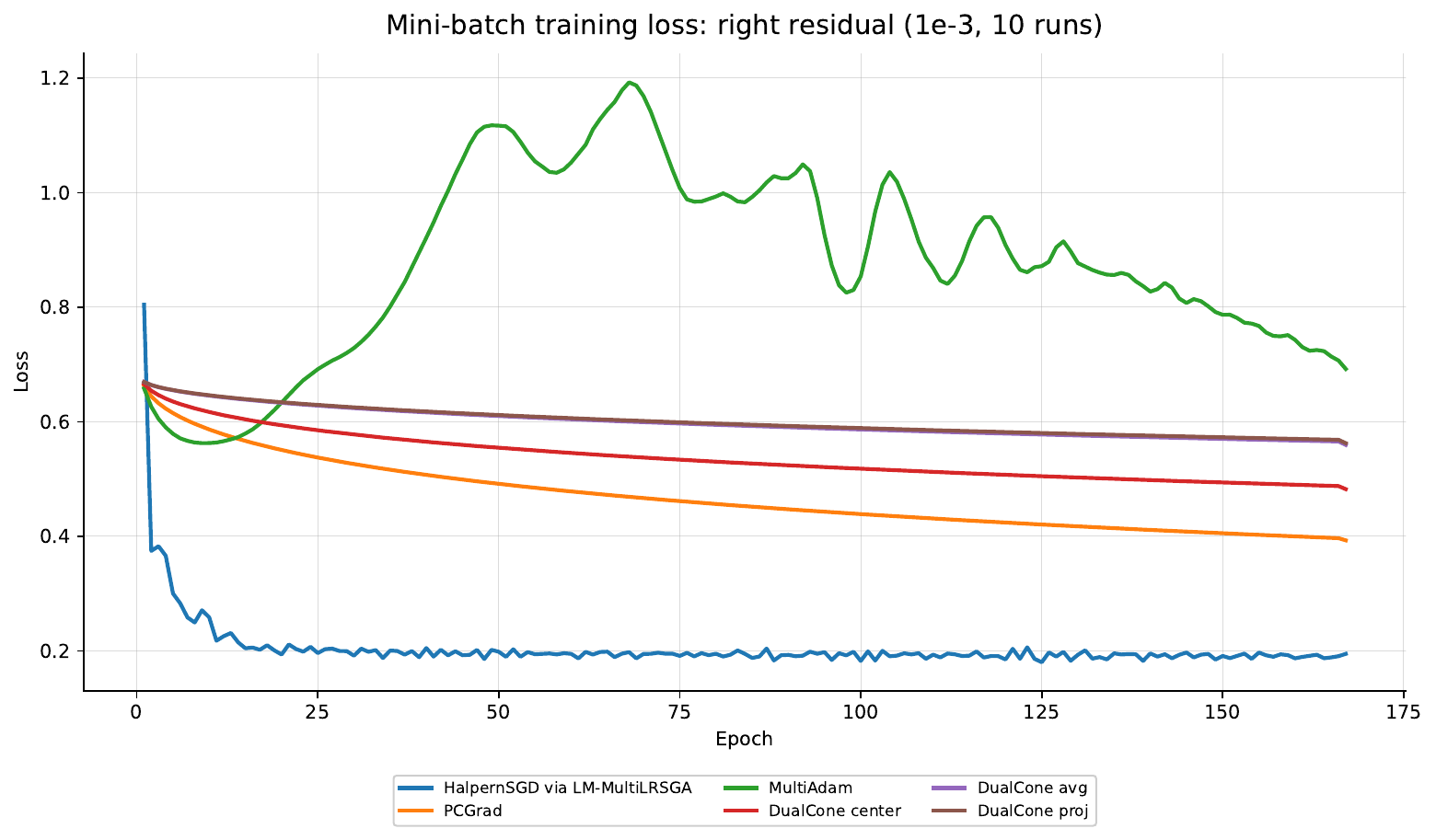}
\caption{\(\Lright\)}
\end{subfigure}

\caption{Experiment~1 local residual curves for \(\eta_0=10^{-3}\).}
\label{fig:app_pinn_exp1_local_lr1e3}
\end{figure}

\begin{figure}[t!]
\centering

\begin{subfigure}{0.75\textwidth}
\centering
\includegraphics[width=\linewidth]{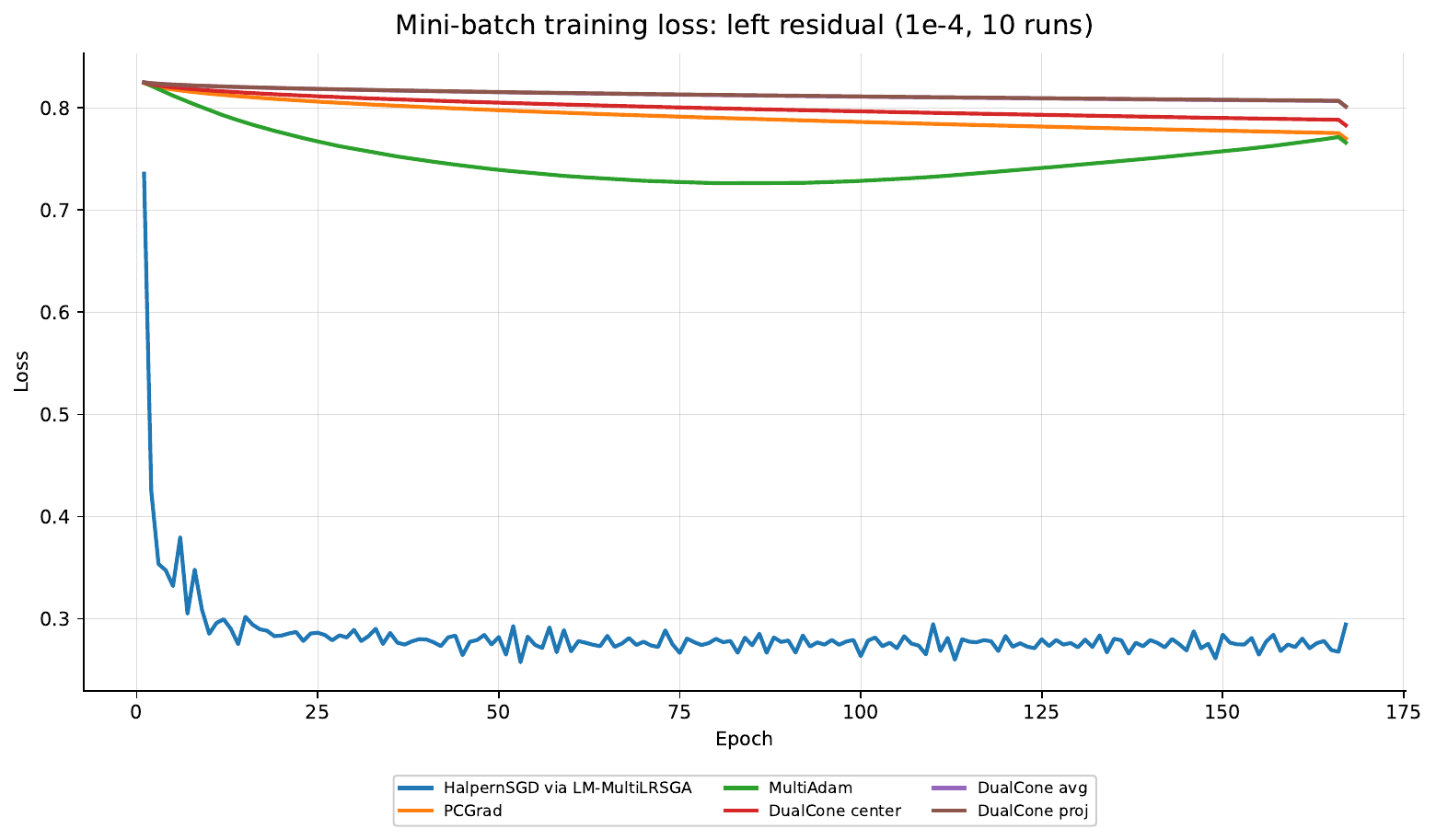}
\caption{\(\Lleft\)}
\end{subfigure}

\medskip

\begin{subfigure}{0.75\textwidth}
\centering
\includegraphics[width=\linewidth]{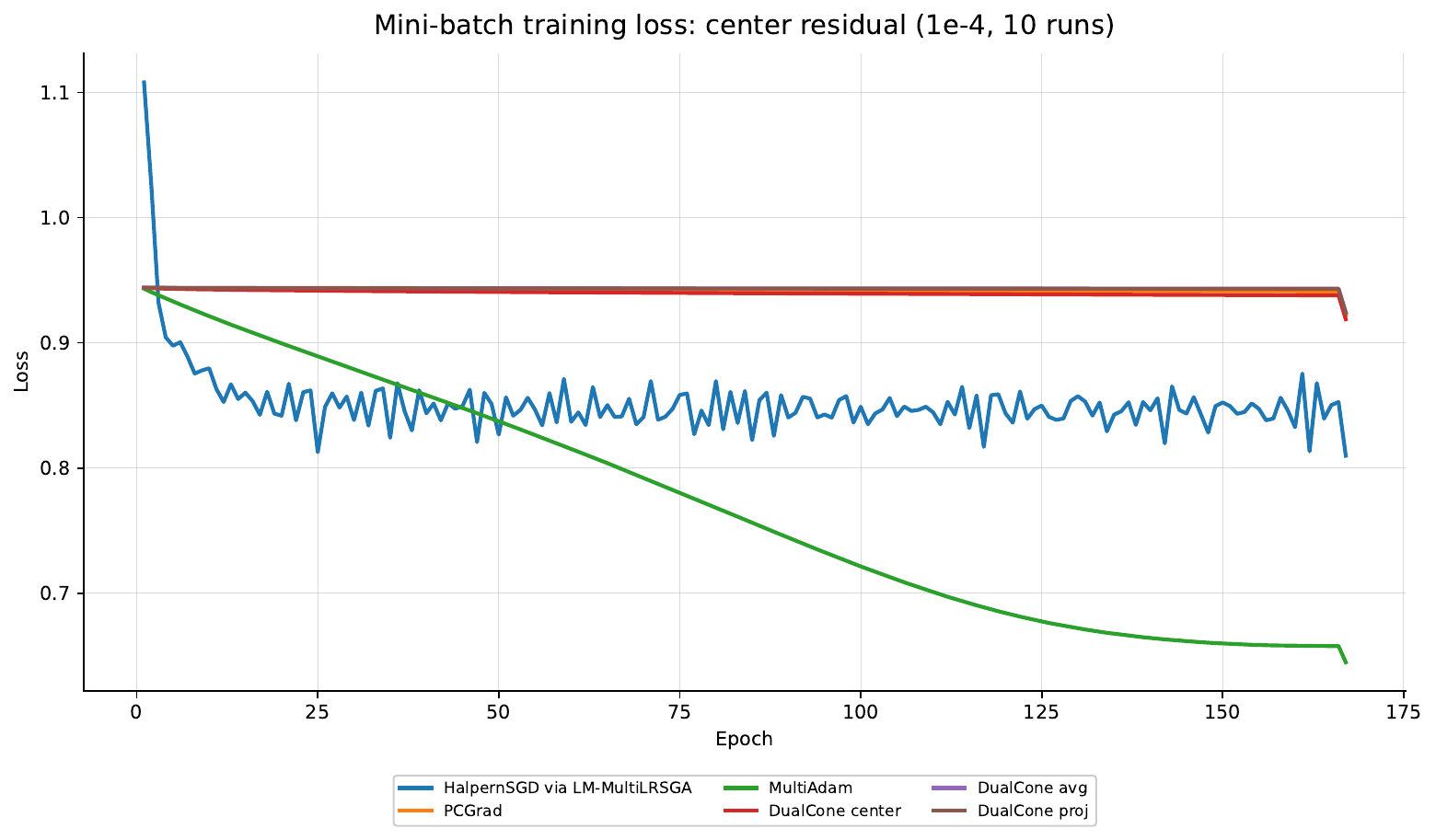}
\caption{\(\Lcenter\)}
\end{subfigure}

\medskip

\begin{subfigure}{0.75\textwidth}
\centering
\includegraphics[width=\linewidth]{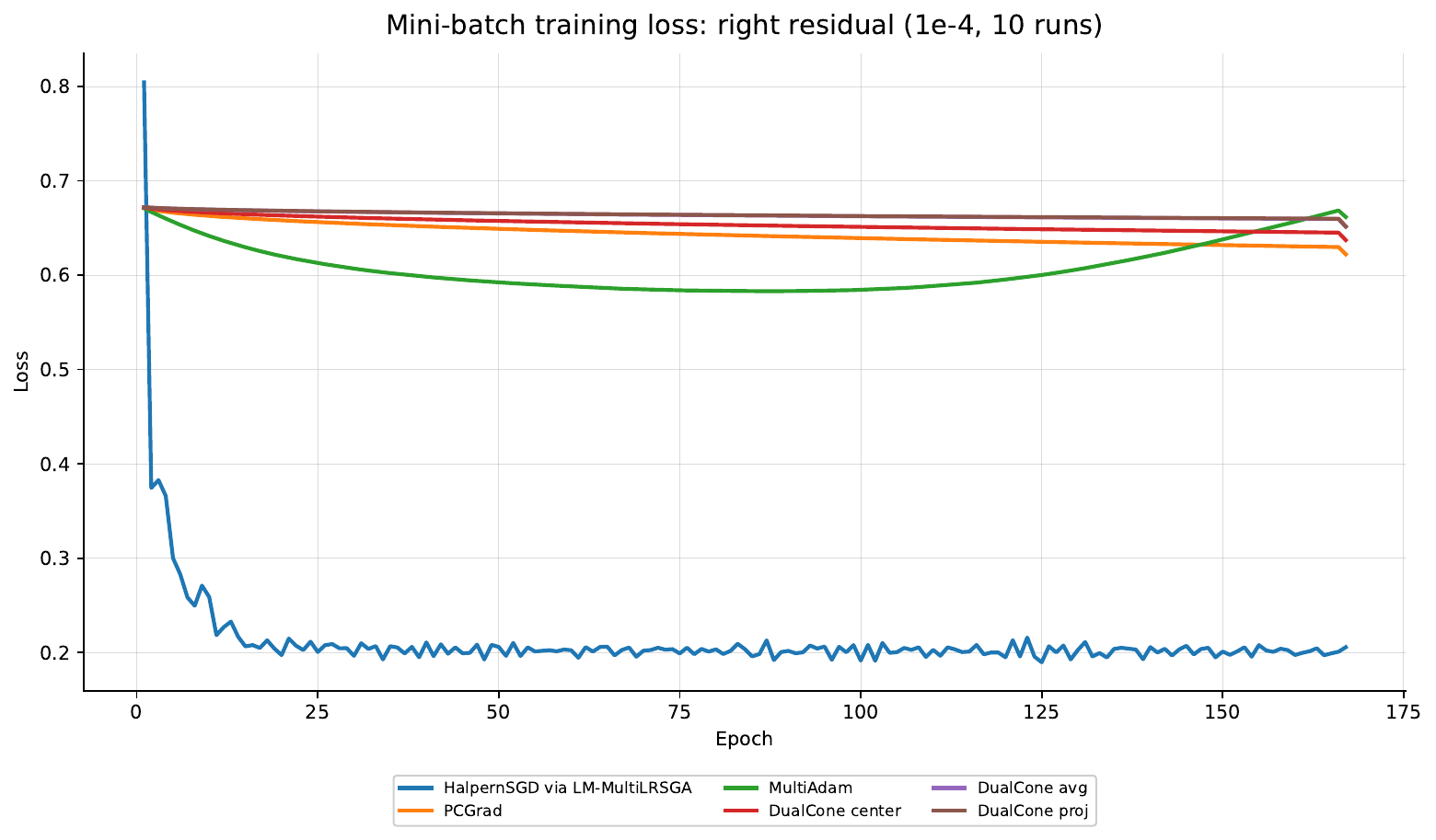}
\caption{\(\Lright\)}
\end{subfigure}

\caption{Experiment~1 local residual curves for \(\eta_0=10^{-4}\).}
\label{fig:app_pinn_exp1_local_lr1e4}
\end{figure}

\clearpage
\newpage
\subsection{Switch-sensitivity diagnostic}
\label{app:pinn_exp2_local_curves}

The switch-sensitivity diagnostic compares different values of the Nash target
that triggers the transition from LM--MultiLRSGA to HalpernSGD. Figure
\ref{fig:app_pinn_exp2_switch_batch_all} reports the epoch-averaged mini-batch
losses. The aggregate full-training curve \(\Lsum\) is shown in the main text;
Figure~\ref{fig:app_pinn_exp2_switch_full_local} reports the corresponding
full-training local components.

\begin{figure}[hbt!]
\centering
\includegraphics[width=0.95\textwidth]{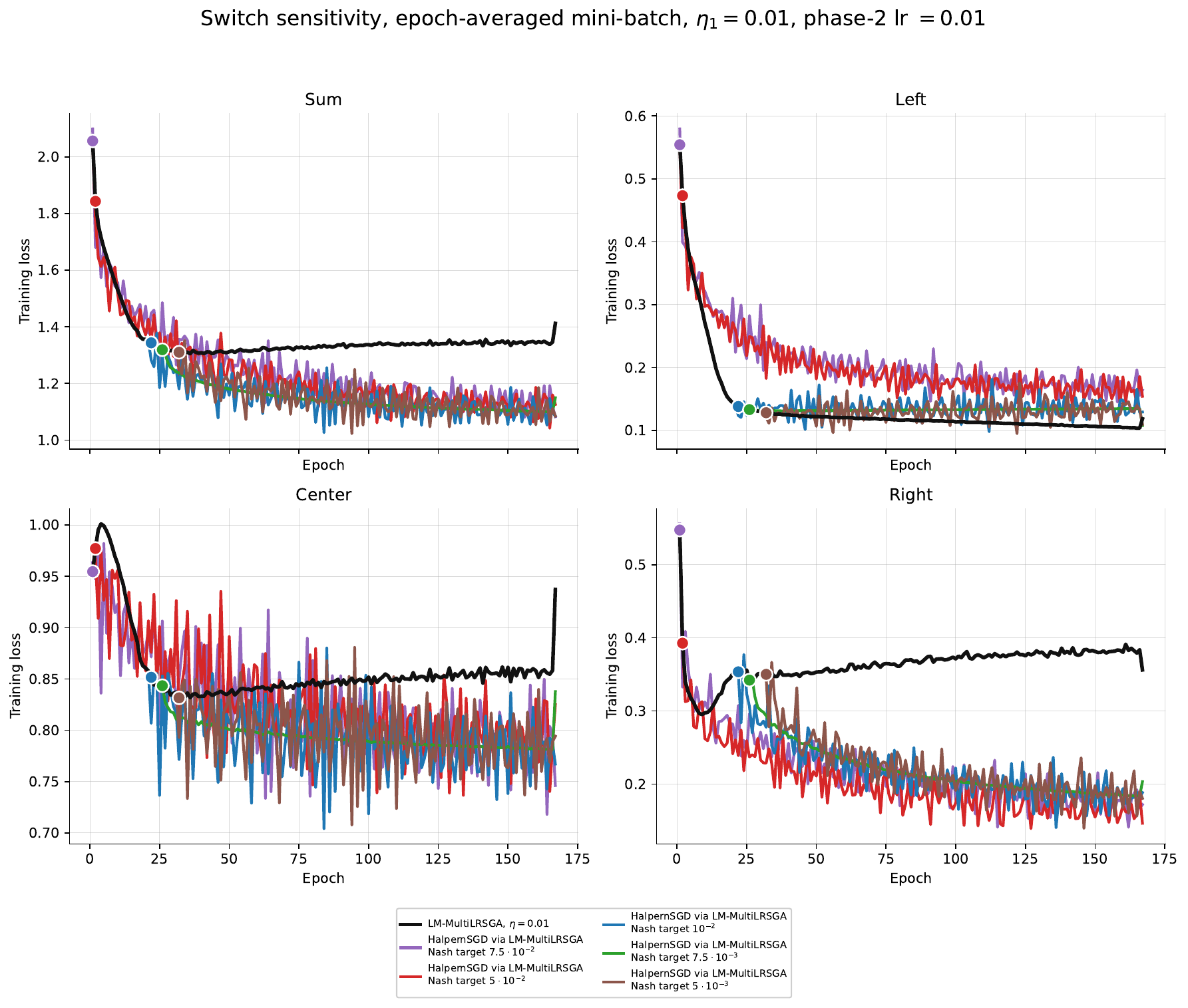}
\caption{
Switch-sensitivity diagnostic with epoch-averaged mini-batch losses. The
LM--MultiLRSGA trajectory is compared with the two-phase method obtained by
switching at different Nash targets.
}
\label{fig:app_pinn_exp2_switch_batch_all}
\end{figure}

\begin{figure}
\centering
\begin{subfigure}{0.32\textwidth}
\centering
\includegraphics[width=\linewidth]{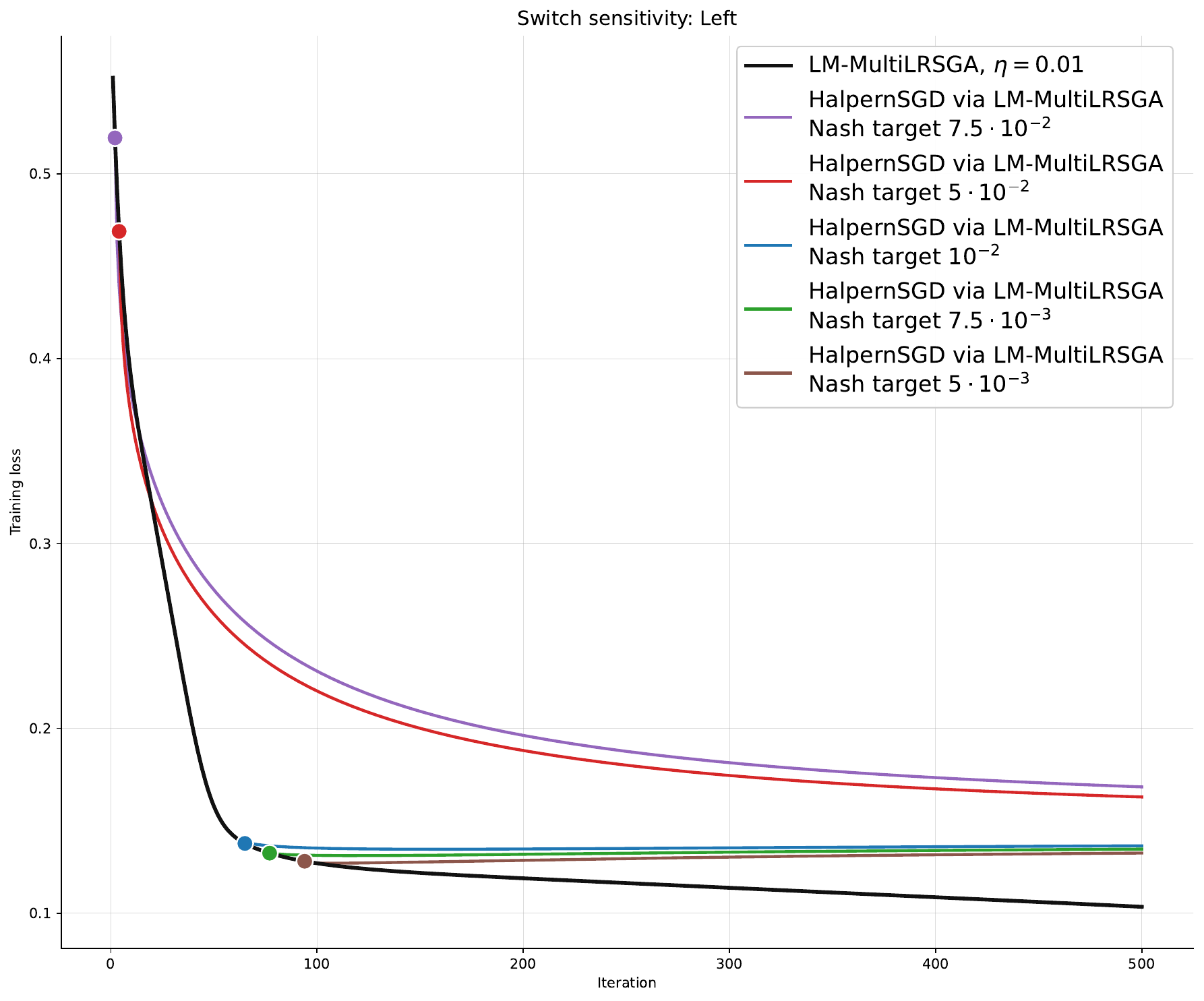}
\caption{\(\Lleft\)}
\end{subfigure}
\hfill
\begin{subfigure}{0.32\textwidth}
\centering
\includegraphics[width=\linewidth]{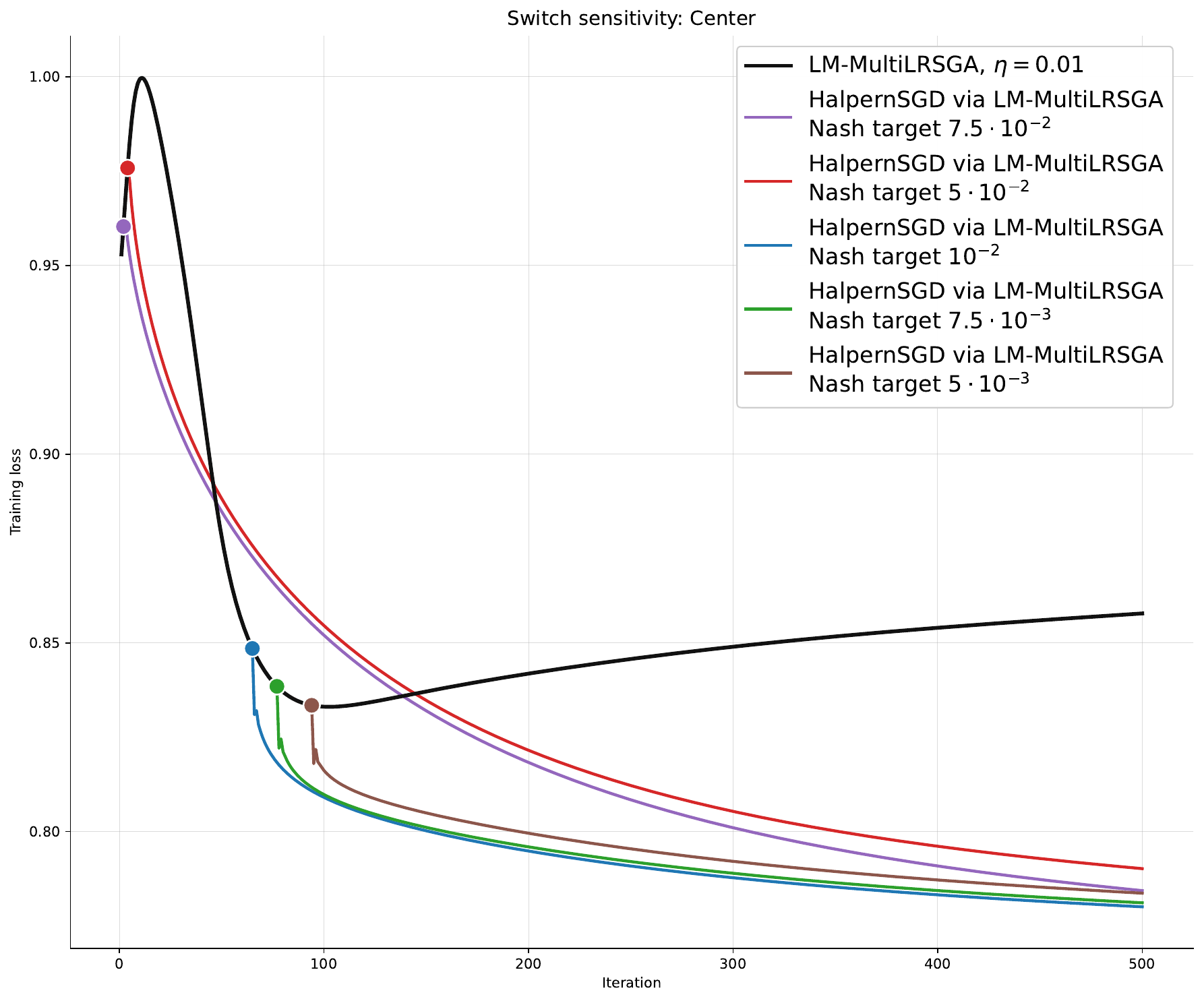}
\caption{\(\Lcenter\)}
\end{subfigure}
\hfill
\begin{subfigure}{0.32\textwidth}
\centering
\includegraphics[width=\linewidth]{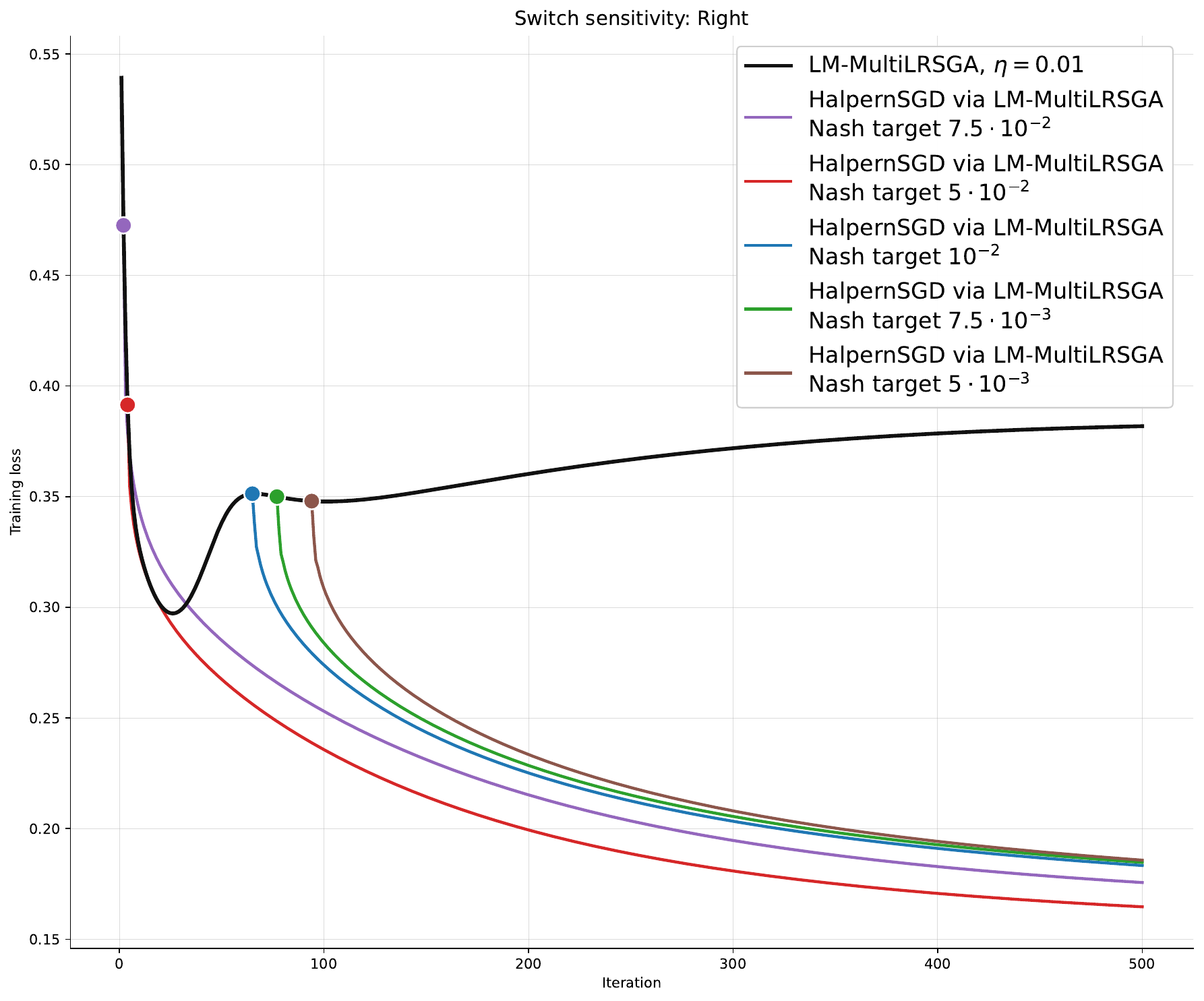}
\caption{\(\Lright\)}
\end{subfigure}
\caption{
Switch-sensitivity diagnostic: full-training local residual components. The
full-training \(\Lsum\) curve is reported in the main text.
}
\label{fig:app_pinn_exp2_switch_full_local}
\end{figure}

\begin{table}[H]
\centering
\caption{
Switch-sensitivity summary on the held-out test collocation grid. The switch
step is the iteration at which the Nash target is first reached.
}
\label{tab:app_pinn_exp2_switch_summary}
\begin{tabular}{lccccc}
\toprule
Nash target & Switch step & \(\Lleft\) & \(\Lcenter\) & \(\Lright\) & \(\Lsum\) \\
\midrule
\(7.5\cdot10^{-2}\) & 2  & 0.1689 & 0.7913 & 0.1749 & 1.1350 \\
\(5\cdot10^{-2}\)   & 4  & 0.1636 & 0.7956 & 0.1655 & 1.1247 \\
\(10^{-2}\)         & 65 & 0.1382 & 0.7863 & 0.1834 & 1.1079 \\
\(7.5\cdot10^{-3}\) & 78 & 0.1365 & 0.7874 & 0.1845 & 1.1085 \\
\(5\cdot10^{-3}\)   & 95 & 0.1343 & 0.7901 & 0.1856 & 1.1100 \\
\bottomrule
\end{tabular}
\end{table}

\clearpage
\newpage
\subsection{LM--MultiLRSGA-only step-size diagnostic}
\label{app:pinn_lm_eta_diagnostic}

This diagnostic studies the competitive phase alone, without the Halpern
refinement. We compare
\begin{equation}
    \eta_{\mathrm{LM}}\in\{0.1,0.05,0.01\},
    \qquad
    \tau_{\mathrm{LM}}=\eta_{\mathrm{LM}}/10,
    \label{eq:app_lm_eta_sweep}
\end{equation}
using the same seed and the first \(200\) LM--MultiLRSGA iterations. The
diagnostic is intended only to interpret the phase-1 behavior; it is not part of
the main baseline comparison.

\begin{figure}[hbt!]
    \centering
    \includegraphics[width=0.75\linewidth]{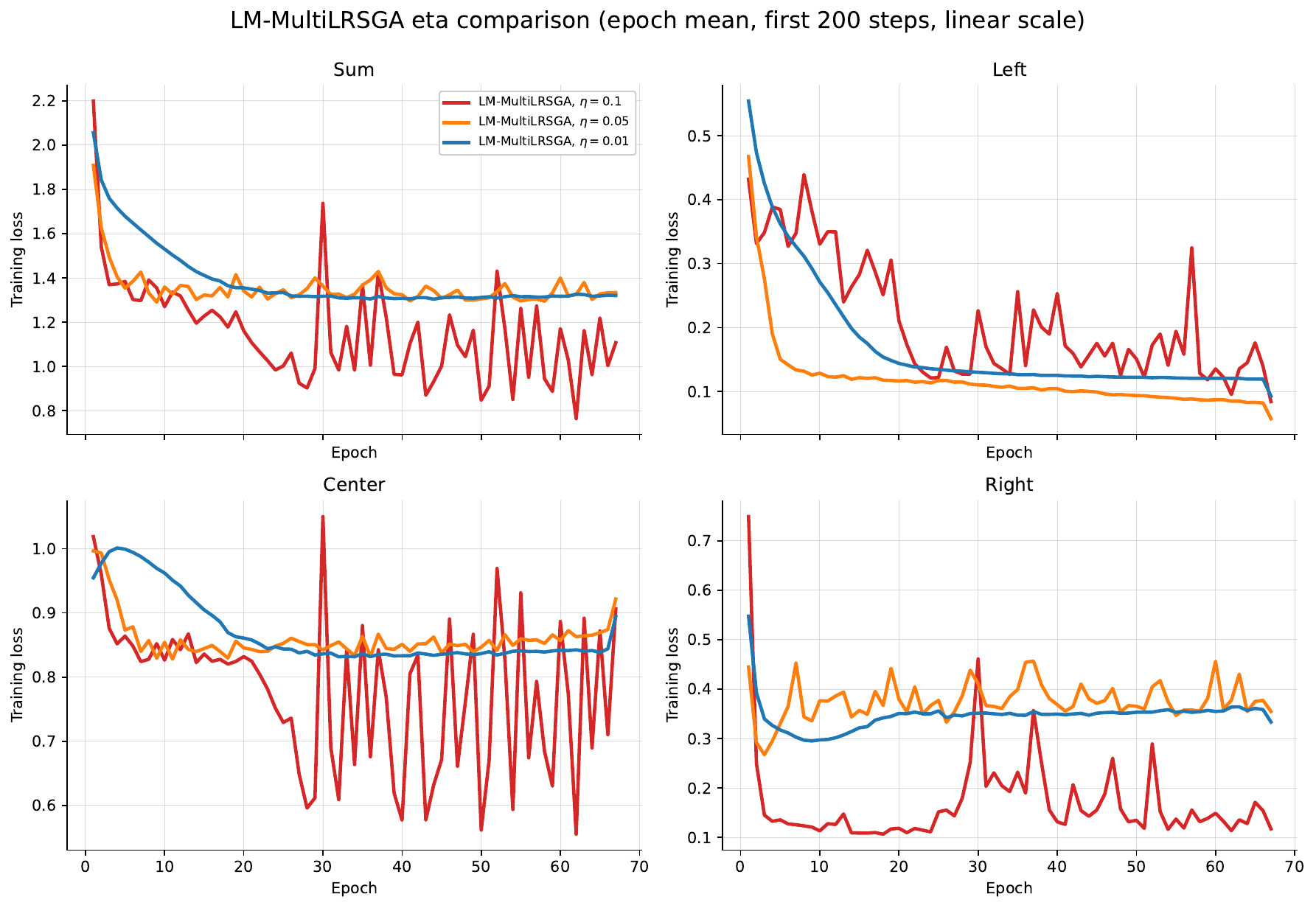}
    \caption{LM--MultiLRSGA-only step-size diagnostic, linear scale, over the first \(200\) iterations.}
    \label{fig:app_pinn_lm_eta_diagnostic_log}
\end{figure}

\begin{figure}[hbt!]
    \centering
    \includegraphics[width=0.75\linewidth]{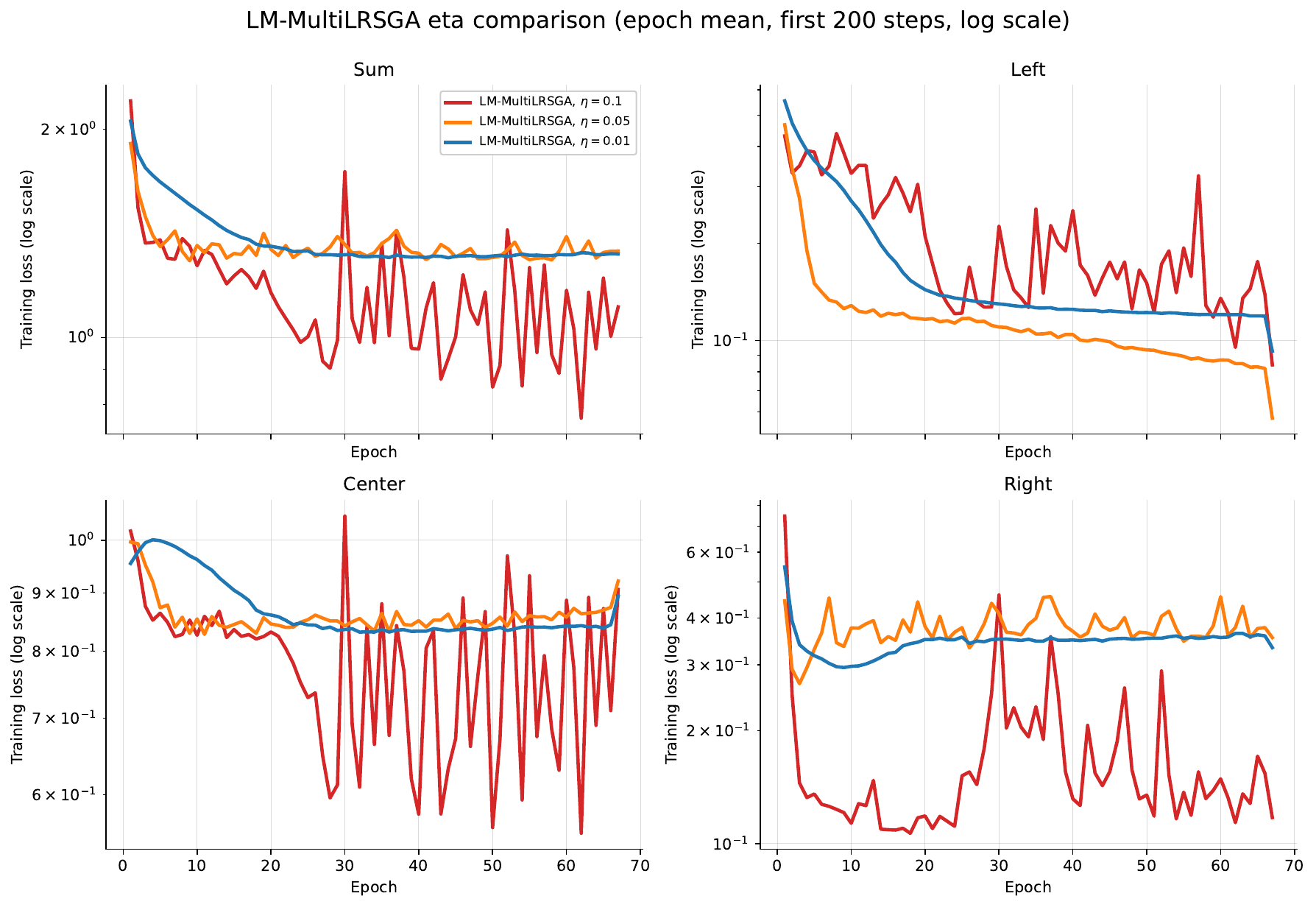}
    \caption{LM--MultiLRSGA-only step-size diagnostic, log scale, over the first \(200\)
iterations.}
    \label{fig:app_pinn_lm_eta_diagnostic_linear}
\end{figure}

\clearpage
\newpage
\subsection{LM--MultiLRSGA memory-history diagnostic}
\label{app:pinn_lm_history_diagnostic}

We also inspect the sensitivity of LM--MultiLRSGA to the memory-history
parameter. The diagnostic fixes
\begin{equation}
    \eta_{\mathrm{LM}}=0.1,
    \qquad
    \tau_{\mathrm{LM}}=0.01,
\end{equation}
and compares
\begin{equation}
    \texttt{history}\in\{3,5,10,20\}.
    \label{eq:app_lm_history_sweep}
\end{equation}
The plot version is visually dominated by occasional spikes. For this reason,
Table~\ref{tab:app_pinn_lm_history_epoch_mean} reports epoch-averaged
mini-batch losses at five checkpoints instead of using a figure. Each entry is
the mean \(\pm\) standard deviation over the five seeds used in the diagnostic.

\begin{table*}[hbt!]
\centering
\caption{
LM--MultiLRSGA memory-history diagnostic. Losses are epoch-averaged mini-batch
training losses at selected checkpoints. The five epoch checkpoints correspond
approximately to iterations \(90,180,270,360,\) and \(450\), since one
mini-batch epoch contains three iterations.
}
\label{tab:app_pinn_lm_history_epoch_mean}
\resizebox{\textwidth}{!}{%
\begin{tabular}{ccccccc}
\toprule
History & Epoch & Seeds & \(\Lleft\) & \(\Lcenter\) & \(\Lright\) & \(\Lsum\) \\
\midrule
\multicolumn{7}{c}{Epoch 30} \\
3  & 30 & 5 & $0.2338\pm0.1645$ & $0.8322\pm0.0473$ & $0.2183\pm0.1428$ & $1.2843\pm0.1322$ \\
5  & 30 & 5 & $0.2427\pm0.1592$ & $0.8414\pm0.0285$ & $0.2583\pm0.1551$ & $1.3424\pm0.0554$ \\
10 & 30 & 5 & $0.2398\pm0.1584$ & $0.8383\pm0.0352$ & $0.2601\pm0.1609$ & $1.3382\pm0.0630$ \\
20 & 30 & 5 & $0.2446\pm0.1568$ & $0.8466\pm0.0184$ & $0.2502\pm0.1466$ & $1.3413\pm0.0537$ \\
\midrule
\multicolumn{7}{c}{Epoch 60} \\
3  & 60 & 5 & $0.2874\pm0.1671$ & $0.8568\pm0.0973$ & $0.1475\pm0.1213$ & $1.2917\pm0.1981$ \\
5  & 60 & 5 & $0.2482\pm0.1532$ & $0.8214\pm0.1272$ & $0.1869\pm0.1386$ & $1.2565\pm0.2416$ \\
10 & 60 & 5 & $0.2730\pm0.1679$ & $0.8212\pm0.1260$ & $0.1451\pm0.1224$ & $1.2393\pm0.2340$ \\
20 & 60 & 5 & $0.2478\pm0.1889$ & $0.7974\pm0.1108$ & $0.1463\pm0.1214$ & $1.1914\pm0.2522$ \\
\midrule
\multicolumn{7}{c}{Epoch 90} \\
3  & 90 & 5 & $1.40\times10^{21}\pm3.12\times10^{21}$ & $7.63\times10^{26}\pm1.71\times10^{27}$ & $6.94\times10^{27}\pm1.55\times10^{28}$ & $7.71\times10^{27}\pm1.72\times10^{28}$ \\
5  & 90 & 5 & $0.3070\pm0.1847$ & $0.8852\pm0.0982$ & $0.3278\pm0.2845$ & $1.5200\pm0.2196$ \\
10 & 90 & 5 & $0.2381\pm0.2505$ & $0.7408\pm0.4548$ & $0.2098\pm0.3122$ & $1.1887\pm0.7725$ \\
20 & 90 & 5 & $0.2476\pm0.1785$ & $0.8973\pm0.1443$ & $0.2566\pm0.1925$ & $1.4015\pm0.3448$ \\
\midrule
\multicolumn{7}{c}{Epoch 120} \\
3  & 120 & 5 & $0.1066\pm0.1276$ & $0.5281\pm0.5017$ & $0.0568\pm0.0542$ & $0.6915\pm0.6655$ \\
5  & 120 & 5 & $0.2173\pm0.2224$ & $0.7101\pm0.4089$ & $0.1565\pm0.2301$ & $1.0839\pm0.6351$ \\
10 & 120 & 5 & $0.1867\pm0.1729$ & $0.6781\pm0.3965$ & $0.0882\pm0.0582$ & $0.9530\pm0.5745$ \\
20 & 120 & 5 & $0.1276\pm0.1356$ & $0.6430\pm0.3796$ & $0.2745\pm0.2952$ & $1.0451\pm0.6421$ \\
\midrule
\multicolumn{7}{c}{Epoch 150} \\
3  & 150 & 5 & $0.1466\pm0.1938$ & $0.5269\pm0.4882$ & $0.0427\pm0.0546$ & $0.7163\pm0.6703$ \\
5  & 150 & 5 & $0.2494\pm0.2653$ & $0.6773\pm0.6484$ & $0.0558\pm0.0680$ & $0.9825\pm0.9033$ \\
10 & 150 & 5 & $0.2567\pm0.2557$ & $0.8111\pm0.4772$ & $0.0941\pm0.0713$ & $1.1619\pm0.6740$ \\
20 & 150 & 5 & $0.1756\pm0.2316$ & $0.7860\pm0.5602$ & $0.3879\pm0.3593$ & $1.3495\pm0.8949$ \\
\bottomrule
\end{tabular}
}
\end{table*}

\clearpage
\newpage

\end{document}